\documentclass{amsart}

\usepackage{amsmath,amssymb}
\usepackage{enumerate}
\usepackage{amsthm}

\usepackage{amssymb}
\usepackage{indentfirst}
\usepackage{comment}
\usepackage{amsthm}
\usepackage{amsmath}
\usepackage[final]{graphicx}
\usepackage{amsmath,amssymb}
\usepackage{float}
\usepackage{txfonts}
\usepackage{mathtools}
\usepackage{stmaryrd}   

\usepackage{tikz}
\usetikzlibrary{cd}
\usetikzlibrary{quotes,angles,positioning,topaths,calc}

\title{Determinant formulas for Ihara zeta functions via simple cycles}
\author{Kosei Watanabe}
\address{Graduate School of Mathematics, Nagoya University, Chikusa-ku, Furo-cho, Nagoya, 464-8602,  Japan}
\email{watanabe.kosei.g8@s.mail.nagoya-u.ac.jp}

\subjclass[2020]{}

\date{June 6, 2026}

\newcommand{\HH}{\mathcal{H}}

\newtheorem{thm}{Theorem}[section]
{\theoremstyle{definition}\newtheorem{Def}[thm]{Definition}}
\newtheorem{prop}[thm]{Proposition}
\newtheorem{lem}[thm]{Lemma}
\newtheorem{cor}[thm]{Corollary}

\theoremstyle{definition}

\newtheorem{rem}[thm]{Remark}

\newtheorem{ex}[thm]{Example}

\newtheorem{set}[thm]{Setup}

\numberwithin{equation}{section}

\usepackage{latexsym}

\newcommand{\NN}{{\mathbb{N}}}
\newcommand{\ZZ}{{\mathbb{Z}}}

\makeatletter
\newcommand{\llangle}{\langle\!\langle}
\newcommand{\rrangle}{\rangle\!\rangle}
\makeatother

\begin{document}
\bibliographystyle{amsalpha+}
\begin{abstract}
Building upon the algebraic framework of trace monoids introduced by Giscard and Rochet, we establish a new determinant formula for the Ihara zeta functions of certain digraphs. We present our results through two main theorems. First, at the algebraic level, we show a general determinant formula expressed by a Cayley determinant over the hike monoid ring, which provides a unifying perspective on several known determinant expressions. Second, by evaluating the lengths of hikes through a natural ring homomorphism, we prove that the reciprocal of the Ihara zeta function can be explicitly expressed as a determinant whose dimension is equal to the number of simple cycles of the line digraph without backtrack.
\end{abstract}


\maketitle
\tableofcontents

\section{Introduction}
Ihara zeta functions were initially introduced
in \cite{I66} as a discrete analogue of the Selberg zeta function. 
Subsequently, Serre stated that Ihara zeta functions are 
interpreted as zeta functions of graphs (cf. \cite{Serre1, Serre2}).
Following these developments, Bass \cite{Bass}, 
Hashimoto \cite{H89}, and Sunada \cite{KS00}
reformulated Ihara zeta functions using terminologies in graph theory.

The Ihara zeta function of a graph is defined 
as an infinite product in general over its prime and reduced cycles. 
A specific property of these functions 
is that their reciprocals can be represented 
by two distinct determinant formulae,
each offering a different perspective.
The first, known as the Ihara expression (cf. \cite{Bass, I66}), 
involves a matrix whose dimension is equal to the number of vertices.
In contrast, the second, known as the Hashimoto expression (cf. \cite{H89}), 
utilizes a matrix with a size equal to twice the number of edges.

The Ihara zeta function of a digraph (also known as a directed graph) 
is defined in an analogous manner (cf. \cite{KS00,MS03}).
Similarly, it possesses both Ihara-type and Hashimoto-type determinant 
expressions. Indeed, an undirected graph can be naturally identified with a 
symmetric digraph by replacing each edge with a pair of oppositely directed 
edges. Under this identification, the Ihara and Hashimoto expressions for the 
symmetric digraph coincide with the formulas for the original undirected graph. 
In this sense, the digraph setting provides a consistent generalization of the undirected graph setting.

However, these classical determinant expressions depend directly on the number of vertices or edges. For example, consider the subdivision of a directed edge that has no inverse. This operation inserts a new vertex along the edge, which increases both the number of vertices and edges. While it preserves the essential cycle structure and does not create any new prime cycles under the backtrackless condition, it increases the matrix sizes in both the Ihara and Hashimoto expressions. This indicates that these traditional expressions do not solely depend on the intrinsic cycle structure of the graph, but are heavily affected by local properties such as the number of vertices and edges.

In contrast, an approach based on trace monoids of cycles shows a different behavior. Since the generators of the trace monoid correspond to simple cycles, their number is completely invariant under the subdivision of an edge without an inverse. This suggests that the trace monoid framework captures the intrinsic cycle structure of the graph. Crucially, our approach is completely unaffected by such local modifications.

To achieve this cycle-intrinsic formulation,
we first establish our framework at the algebraic level
of the hike monoid ring. Building on the work of 
Giscard and Rochet \cite{GR17}, we treat the Ihara zeta function
as the Hilbert series of a specific trace monoid,
whose elements are referred to as hikes.
This algebraic formulation is our first major result (Theorem \ref{thm:cliquehike}), 
where we show a formula expressed by a Cayley determinant
over the hike monoid ring. This formulation gives
an Ihara-zeta-theoretic interpretation
of existing Cayley determinant expressions.
For instance, the Cayley determinant expression
established by Choffrut and Goldwurm \cite[Proposition 2]{CG99}
under the restriction of a "transitive orientation" can be naturally 
reinterpreted within our framework as 
the identity $\mathrm{Cdet}(I-M) = p_\mathcal{H}(G(\mathcal{H}))$.

By evaluating the lengths of hikes in this algebraic framework
through a natural ring homomorphism, we successfully derive
our second major result (Theorem \ref{thm:main1}),
which directly provides a determinant formula
for the reciprocal of the Ihara zeta function itself.
The core idea is to focus on the complete set of simple cycles
of the line digraph without backtrack $L_{WB}(G)$.
Although the structure of independent cycles does not automatically
yield a simple determinant,
we overcome this difficulty by applying a specific "closure operation"
to the noncommutation graph $\tilde{G}(\mathcal{H})$,
embedding it into a digraph $\tilde{G}_+(\mathcal{H})$.
It should be emphasized, however, that such a closure cannot always be constructed for an arbitrary digraph; rather, our main determinant formula specifically holds for the class of digraphs whose noncommutation graphs admit a consistent acyclic closure.

Although Theorem \ref{thm:main1} can logically be viewed as a corollary 
of Theorem \ref{thm:cliquehike}, we present it as a standalone main theorem 
from the viewpoint of the Ihara zeta function.
\begin{thm}\label{thm:main1}
Let $G$ be a finite, simple, and connected digraph, and let $\zeta(G, u)$ be its Ihara zeta function. Under the following notation, the reciprocal of the Ihara zeta function satisfies the following determinant formula:
\begin{itemize}
    \item $L_{WB}(G)$: the line digraph without backtracks of $G$ (cf. Definition \ref{def:LWBG}),
    \item $\Sigma$: the set of all simple cycles in $L_{WB}(G)$ with $\# \Sigma=n$ (cf. Definition \ref{def:simplecycle}),
    \item $\mathcal{H}=\Sigma^*/I_{\mathcal{H}}$: the hike monoid (cf. Definition \ref{def:hikemonoid}),
    \item $\Tilde{G}(\mathcal{H})$: the noncommutation graph of $\mathcal{H}$ (cf. Definition \ref{def:noncomgraph}),
    \item $\Tilde{G}_+(\HH)$: a closure of $\Tilde{G}(\HH)$ (cf. Definition \ref{def:closure}),
    \item $l(h)$: the total length of a hike $h = h_{i_1} \dots h_{i_k}$ on $L_{WB}(G)$, defined by $l(h)=\sum_{j=1}^k l(h_{i_j})$,
    \item $D_w(\Tilde{G}_+(\HH))$: the diagonal matrix $\mathrm{diag}[u^{l(h_1)},\dots,u^{l(h_n)}]$,
    \item $A(\Tilde{G}_+(\HH))$: the adjacency matrix of $\Tilde{G}_+(\HH)$.
\end{itemize}
\begin{equation*}
\zeta(G,u)^{-1} = \det(I_n - D_w(\Tilde{G}_+(\HH))A(\Tilde{G}_+(\HH))).
\end{equation*}
\end{thm}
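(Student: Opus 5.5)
The plan is to prove Theorem \ref{thm:main1} in three stages: reduce the Ihara zeta function to the Hilbert series of the hike monoid $\mathcal{H}$, invert that series by a Cartier--Foata type identity, and then identify the resulting clique polynomial with the determinant using the algebraic formula of Theorem \ref{thm:cliquehike} and a length-evaluation homomorphism.

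\emph{Step 1: $\zeta$ as a Hilbert series.} Following Giscard and Rochet \cite{GR17}, I first pass to the line digraph without backtrack $L_{WB}(G)$. Backtrackless closed walks in $G$ correspond bijectively to closed walks in $L_{WB}(G)$, with lengths preserved. This gives the Hashimoto-type expression
\[
\zeta(G,u)^{-1}=\det(I-uB),
\]
where $B$ is the adjacency matrix of $L_{WB}(G)$. By the theory of hikes in \cite{GR17}, $\det(I-uB)$ equals $\sum_{h}(-1)^{|h|}u^{l(h)}$, summed over all self-avoiding hikes $h$. These are exactly the sets of pairwise vertex-disjoint, hence commuting, simple cycles in $\Sigma$. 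Equivalently, $\zeta(G,u)=\sum_{h\in\mathcal{H}}u^{l(h)}$, and the Cartier--Foata inversion reads
\[
\Bigl(\sum_{h\in\mathcal{H}}h\Bigr)^{-1}=p_{\mathcal{H}}=\sum_{C}(-1)^{|C|}\prod_{c\in C}c,
\]
where $C$ runs over cliques of the commutation graph.

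\emph{Step 2: the algebraic identity.} Next I invoke Theorem \ref{thm:cliquehike}. It states that, for the closure $\tilde G_+(\mathcal{H})$ of Definition \ref{def:closure}, the Cayley determinant over the hike monoid ring satisfies
\[
\mathrm{Cdet}(I-M)=p_{\mathcal{H}},
\]
where $M=D\,A(\tilde G_+(\mathcal{H}))$ and $D=\mathrm{diag}[h_1,\dots,h_n]$ has the generators on its diagonal.

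\emph{Step 3: evaluation.} Let $\varphi\colon\mathbb{Z}[\mathcal{H}]\to\mathbb{Z}[u]$ be defined by $h\mapsto u^{l(h)}$. It is a well-defined ring homomorphism because $l$ is additive and invariant under the commutations in $I_{\mathcal{H}}$. The target ring is commutative, so $\varphi$ sends the Cayley determinant to the ordinary determinant: each term of $\mathrm{Cdet}$, taken in its prescribed row order, maps to the corresponding commutative product. Applying $\varphi$ entrywise gives $\varphi(M)=D_w(\tilde G_+(\mathcal{H}))A(\tilde G_+(\mathcal{H}))$, and hence
\[
\det\bigl(I_n-D_wA\bigr)=\varphi(p_{\mathcal{H}})=\sum_{C}(-1)^{|C|}u^{\sum_{c\in C}l(c)}.
\]
By Step 1 this sum equals $\zeta(G,u)^{-1}$.

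\emph{Main obstacle.} The real content lies in Theorem \ref{thm:cliquehike}, which I take as given. Within this proof, the steps I expect to need the most care are the two compatibility checks:
\begin{itemize}
\item matching the cliques of the commutation graph with the vertex-disjoint families of simple cycles in the Giscard--Rochet expansion of $\det(I-uB)$;
\item confirming that the closure's adjacency matrix enters with the orientation and sign convention under which $\varphi$ of the Cayley determinant is literally $\det(I_n-D_wA)$.
\end{itemize}
I would check the second point by expanding $\det(I_n-D_wA)$ over permutations. Since the closure is acyclic, only the identity permutation should contribute nonzero terms after the closure's transitivity is used, and those terms should reproduce exactly the clique sum.
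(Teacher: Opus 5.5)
Your argument is essentially the paper's proof: Hashimoto's formula together with the disjoint-cycle expansion (Proposition~\ref{prop:ponst}, Lemma~\ref{lem:hashiwei}) gives $\zeta(G,u)^{-1}=p_w(G(\HH),u)$, and applying the length evaluation $\psi_1$ (your $\varphi$) to Theorem~\ref{thm:cliquehike} gives $\det(I_n-D_wA)=p_w(G(\HH),u)$, exactly as in Corollary~\ref{cor:hikesum}; the Cartier--Foata/Hilbert-series detour is correct but not needed. Your closing heuristic is wrong, though it is not load-bearing: the closure is not acyclic, since it has a loop at every vertex and both arcs between noncommuting cycles, and non-identity permutations do contribute and cancel (e.g.\ $\det\begin{pmatrix}1&1\\1&1\end{pmatrix}=0$ relies on the transposition); in any case that check is superfluous, because a ring homomorphism into the commutative ring $\ZZ[u]$, applied entrywise, visibly commutes with the Leibniz expansion, so $\psi_1(\mathrm{Cdet}(I_n-DA))=\det(I_n-D_wA)$ is immediate.
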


The remainder of this paper is organized as follows.
In Section $2$,
we review the basic definitions of digraphs 
and line digraphs without backtrack.
Section $3$ provides the necessary background on trace monoids.
In Section $4$, we define the trace monoid structure 
on Ihara zeta functions. 
Section $5$ presents our Main Theorem 
along with its rigorous proof and 
a detailed concrete example to illustrate the entire procedure.

\section{Graphs and digraphs}
In this section, we provide the fundamental definitions of graphs and digraphs. Subsequently, we define the Ihara zeta function within the framework of directed graphs as in \cite{KS00}.

\begin{Def}
A \textit{graph} $G=(V(G),E(G))$ consists of 
a set $V(G)$ of \textit{vertices} and 
a multiset $E(G)$ of \textit{edges}. 
The structure is characterized as follows:
\begin{itemize}
\item Each edge is an unordered pair $\{u,v\}$ for $u,v \in V(G)$.
An edge with $u=v$ is called a \textit{loop}.
\item The graph $G$
is \textit{finite} if both $V(G)$ and $E(G)$ are finite sets.
\item The graph $G$ has \textit{multiple edges} if $E(G)$
contains multiple copies of the same pair.
\item The graph $G$ is called a \textit{simple graph} 
if it has neither loops nor multiple edges.
\item The graph $G$ is \textit{connected} if for any two distinct vertices 
$u,v \in V(G)$, there exists a sequence $(v_0,v_1,\dots,v_n)$ of $V(G)$ 
such that $v_0=u, \ v_n=v$ and 
for each $i \in \{0,1,\dots,n-1\}$, $\{v_i,v_{i+1}\} \in E(G)$.
\end{itemize}
\end{Def}

Following the definition of undirected graphs,
we introduce the notation for directed graphs.

\begin{Def}
A \textit{digraph} (a.k.a. directed graph)
$G=(V(G),E(G))$ consists of 
a set $V(G)$ of \textit{vertices} and 
a multiset $E(G)$ of \textit{directed edges}. 
The structure is characterized as follows:
\begin{itemize}
\item Each edge $e \in E(G)$ is an ordered pair 
$(u,v) \in V(G) \times V(G)$,
representing an edge that originates from vertex $u$
and terminates at $v$.
This is denoted by $o(e)=u$ and $t(e)=v$.
\item 
If $(u,v) \in E(G)$ and $(v,u) \in E(G)$ these edges are 
considered \textit{inverses} of each other, often denoted by 
$e=(u,v)$ and $e^{-1}=(v,u)$.
    \item A directed edge with $u=v$ is called a \textit{directed loop}.
\item The digraph $G$
is \textit{finite} if both $V(G)$ and $E(G)$ are finite.
\item The digraph $G$ has \textit{multiple directed edges} if $E(G)$
contains multiple copies of the same ordered pair.
\item The digraph $G$ is called a \textit{simple digraph} 
if it has neither directed loops nor multiple directed edges.
\end{itemize}
\end{Def}

Next, we introduce the notion of connectivity for digraphs.

\begin{Def}
Let $G=(V(G),E(G))$ be a digraph.
A digraph G is said to be \textit{(weakly) connected} 
if for every pair of distinct vertices $s,t \in V(G)$,
there exists a finite sequence of vertices
$(v_0,v_1,\dots,v_n)$ of $V(G)$ such that 
$v_0=s, \ v_n=t$, and for each $i\in \{0,1,\dots,n-1\}$,
either $(v_i,v_{i+1}) \in E(G)$ or $(v_{i+1},v_i) \in E(G)$ holds.
\end{Def}

Next, we define paths and cycles in digraphs,
which are essential for defining the Ihara zeta function.

\begin{Def}\label{def:simplecycle}
    Let $G$ be a finite, simple and connected digraph. 
    The path on $G$ is defined as follows:
    \begin{itemize}
        \item A \textit{path} $P=(e_1,e_2,\dots,e_n)$
        of length $n$ on $G$ 
        is a finite sequence of edges $e_j \in E(G)$ such that 
        $t(e_k)=o(e_{k+1})$
        for all $k \in \{1,\dots,n-1\}$. The length of $P$ is denoted by $l(P)=n$.
        \item A path $P=(e_1,e_2,\dots,e_n)$ on $G$
        is called a \textit{cycle} if $t(e_n)=o(e_1)$.
        \item A cycle $C=(e_1,e_2,\dots,e_n)$ on $G$ 
        is called \textit{prime} if it is not a power of any 
        shorter cycles; that is 
        $C\neq B^r$, for any cycle $B$ and any integer $r \ge 2$.
        \item A cycle $C=(e_1,e_2,\dots,e_n)$ on $G$
        is called \textit{reduced} if 
        $e_{k+1}^{-1} \neq e_k$ for all $k\in \{1,\dots,n\}$.
        However $e_{n+1}=e_1$.
        \item A cycle $C=(e_1,e_2,\dots,e_n)$ on $G$
        is called {\it simple} if all of its vertices are distinct, i.e., 
        $\#\{o(e_1),o(e_2),\dots,o(e_n)\}=n$
    \end{itemize}
\end{Def}

\begin{Def}
    Let $G$ be a finite, simple and connected digraph.
    Two cycles $C_1=(e_0,e_1,\dots,e_{n-1})$ and 
    $C_2=(f_0,f_1,\dots,f_{n-1})$ of the same length $n$
    are said to be equivalent 
    if there exists some $k \in \NN$ such that 
    $e_{j+k \pmod n}=f_j$ for all $j=0,\dots,n-1$.
    The equivalence class of a cycle $C$ is denoted by $[C]$.
\end{Def}

Next, we provide the definition of Ihara zeta functions for digraphs.

\begin{Def}[\cite{KS00}]
    Let $G$ be a finite, simple and connected digraph.
    The Ihara zeta function $\zeta(G,u)$ of $G$ is defined as follows:
    \begin{equation*}
        \zeta(G,u)=\prod_{[C]}\left(1-u^{l(C)}\right)^{-1},
    \end{equation*}
    where the product runs over all equivalence classes of prime and reduced 
    cycles on $G$.
\end{Def}

We provide the definition of the line digraph of $G$
(see, e.g., \cite{BB21}).

\begin{Def}
    Let $G=(V(G),E(G))$ be a finite, simple and connected digraph.
    The {\it line digraph} of $G$, denoted by $L(G)$, 
    is the digraph defined as follows:
    \begin{itemize}
        \item $V(L(G))=E(G)$,
        \item $E(L(G))=\{(e,f) 
        \mid e,f \in E(G),\  t(e)=o(f) \}$.
    \end{itemize}
\end{Def}

\begin{Def}\label{def:LWBG}
    Let $G$ be a finite, simple and  connected digraphs.
    The {\it line digraph without backtrack} of $G$, denoted by $L_{WB}(G)$,
    is the digraph defined as follows:
    \begin{itemize}
        \item $V(L_{WB}(G))=E(G)$,
        \item $E(L_{WB}(G))=\{(e,f) 
        \mid e,f \in E(G), \ t(e)=o(f),\  e^{-1} \neq f \}$.
    \end{itemize}
\end{Def}

\begin{rem}
The structure we define as a "line digraph without backtracks"
corresponds to the \textit{oriented line graph} in \cite{KS00}.
In this article, we deliberately avoid the latter term 
to prevent confusion with the standard \textit{line digraph}.
Instead, we adopt a more explicit terminology 
to ensure a clear distinction between these two concepts.
\end{rem}

The adjacency matrix of a digraph is fundamental 
to describing the determinant expression of its Ihara zeta function.

\begin{Def}
    Let $G$ be a simple digraph with $n$ vertices.
    The {\it adjacency matrix} of $G$, $A(G) \in M_n(\ZZ)$ 
    is defined as follows:
    $A(G) =(a_{ij})_{1 \le i,j \le n}$ and 
    \begin{equation*}
        a_{ij}=\begin{cases}
            1 & \text{if}\  (v_i,v_j) \in E(G),\\
            0 & \text{otherwise}.
        \end{cases}
    \end{equation*}
\end{Def}

The reciprocal of the Ihara zeta function for a digraph
can be expressed via several determinant formulae.
Here, we present one such expression, 
known as the \textit{Hashimoto-type expression}.

\begin{prop}[\cite{KS00}]\label{prop:hashimoto}
Let $G$ be a finite, simple and connected digraph, 
and let $\zeta(G,u)$ be its Ihara zeta function. 
Then, the following determinant formula holds:
\begin{equation*}
\zeta(G,u)^{-1} = \det(I - A(L_{WB}(G))u),
\end{equation*}
where $L_{WB}(G)$ is the line digraph without backtrack of $G$,
and $A(L_{WB}(G))$ is its adjacency matrix.
\end{prop}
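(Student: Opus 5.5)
The statement to prove is Proposition \ref{prop:hashimoto}. The plan is the standard logarithmic-derivative argument. We compare the expansions of $\log\zeta(G,u)$ and $-\log\det(I-A u)$, where $A=A(L_{WB}(G))$, as formal power series in $u$.

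\textbf{Trace side.} Using $\log\det(I-Au)=\mathrm{tr}\log(I-Au)$, we get
\begin{equation*}
-\log\det(I-Au)=\sum_{m\ge1}\frac{\mathrm{tr}(A^m)}{m}u^m .
\end{equation*}
The entry $(A^m)_{ee}$ counts closed walks of length $m$ in $L_{WB}(G)$ starting at the vertex $e\in E(G)$. Such a walk is a sequence $(e=e_1,e_2,\dots,e_m,e_1)$ with $t(e_k)=o(e_{k+1})$ and $e_{k+1}\neq e_k^{-1}$, indices taken cyclically. This is exactly a reduced cycle of length $m$ in $G$ starting with the edge $e$, and the cyclic condition includes the wrap-around $e_1\neq e_m^{-1}$. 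Hence $\mathrm{tr}(A^m)=N_m$, the number of reduced cycles of length $m$ in $G$, counted with a chosen starting edge.

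\textbf{Euler product side.} Every reduced cycle $C$ of length $m$ is uniquely $B^r$ for a prime reduced cycle $B$ with $l(B)=d$ and $m=dr$. A power of a reduced cycle is again reduced, since the junction condition $e_1\neq e_n^{-1}$ is already part of reducedness of $B$. The equivalence class $[B]$ of a prime cycle contains exactly $d$ distinct cycles, because primality means all cyclic shifts are distinct. Therefore
\begin{equation*}
N_m=\sum_{d\mid m} d\,\pi(d),
\end{equation*}
where $\pi(d)$ is the number of classes of prime reduced cycles of length $d$. Then
\begin{equation*}
\log\zeta(G,u)=\sum_{[B]}\sum_{r\ge1}\frac{u^{r\,l(B)}}{r}=\sum_{d}\pi(d)\sum_{r}\frac{u^{dr}}{r}=\sum_{m}\frac{u^m}{m}\sum_{d\mid m}d\,\pi(d)=\sum_m\frac{N_m}{m}u^m .
\end{equation*}

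Comparing the two sides, the log-series agree. Exponentiating gives the identity of formal power series, which are convergent for small $|u|$, since both sides equal $1$ at $u=0$.

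\textbf{Main obstacle.} The delicate step is the bijection between closed walks in $L_{WB}(G)$ and reduced cycles with a chosen starting edge. We must check that the cyclic backtrack condition, including the wrap-around, is exactly encoded by the edge set of $L_{WB}(G)$. We must also check that counting classes $[B]$ with weight $d$ accounts correctly for the starting points. Simplicity of $G$ ensures that $e^{-1}$ is well defined and unique, so the exclusion $f\neq e^{-1}$ removes exactly one potential successor when the inverse edge exists, and none otherwise.
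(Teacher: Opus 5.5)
Your proof is correct. The paper does not prove this proposition itself but defers to \cite{KS00,FZ99}, and your comparison of $\sum_m \mathrm{tr}(A^m)u^m/m$ with the logarithm of the Euler product is the standard argument behind the cited result. You justify reducedness of the root in only one direction: you need also that the primitive root $B$ of a reduced cycle $C=B^r$ is itself reduced. This is immediate, since every cyclically consecutive pair of $B$, including $(e_d,e_1)$, also occurs consecutively in $C$.
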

For a detailed proof of this proposition,
we refer the reader to, for example, \cite{FZ99,KS00}.

\section{Trace monoids}

In this section,
we introduce the notion of 
a trace monoid (a.k.a partially commutative monoid, graph monoid).
The notion of trace monoid 
was introduced by Cartier and Foata in \cite{CF69}.

\begin{Def}
    Let $A$ be a (finite or infinite) set.
    The free monoid generated by $A$ is denoted as $A^*$.
    Each element of $A^*$ is called a word.
\end{Def}

We define trace monoid in the following definition.

\begin{Def}{[cf. \cite{DuchampKrob92,K09,König91}]}\label{def:trace_monoid}
Let $A$ be a finite set.
    We introduce a {\it commutation relation} $I \subset A\times A$, which is 
    irreflexive and symmetric; 
    that is, $(a,a) \not\in I$
    for all $a \in A$, and 
    $(a,b) \in I$ implies $(b,a) \in I$.
    This relation induces the congruence $\equiv_I$ on the 
    free monoid $A^*$ such that $ab\equiv_I ba$ for all $(a,b) \in I$.
    The quotient monoid $A^*/\equiv_I$ is called a 
    \textit{trace monoid}
    (or \textit{partially commutative monoid, graph monoid}).
    Each element of the trace monoid is called 
    a \textit{trace}, representing an equivalence class of words in $A^*$.
\end{Def}

Following convention, we shall denote the quotient monoid 
$A^*/ \equiv_I$ simply as $A^*/I$

\begin{rem}
Let $M=A^*/I$ be the trace monoid defined by a finite set $A$
and a commutation relation $I$.
The multiplication in $M$ is induced directly from 
the concatenation in the free monoid $A^*$.
That is, for any traces $[w],[v] \in M$,
their product is given by $[w]\cdot [v]=[wv]$.
This ensures that the following diagram commutes:
\begin{center}
\begin{tikzcd}
A^* \times A^* \arrow[rr] \arrow[d, "\pi \times \pi"'] &  & A^* \arrow[d, "\pi"] \\
A^*/I \times A^*/I \arrow[rr]                          &  & A^*/I               
\end{tikzcd}
\end{center}
where $\pi:A^* \to M$ is the canonical projection.
\end{rem}

We define the monoid algebra associated 
with a trace monoid in order to consider
formal power series and polynomials.

\begin{Def}[\cite{DK93}]
Let $M=A^*/I$ be a trace monoid.
We denote by $ K=\ZZ \llangle A \rrangle $ (resp. $\ZZ \langle A \rangle $) 
the $\ZZ$-algebra of non-commutaive 
formal power series (resp. polynomials) 
over $A$.
Let $J_I$ be the two-sided ideal of $K$ 
generated by the elements 
$\{(ab-ba)  \mid (a,b) \in I\}$.
The partially commutative algebra of formal power series over $M$,
denoted by $\ZZ \llangle M \rrangle$, 
is defined as the quotient $K / J_I$.
The polynomial ring $\ZZ \langle M \rangle $ is similarly defined as 
$\ZZ \langle A \rangle /J_I$.
\end{Def}

We define two graphs associated with a trace monoid as follows.

\begin{Def}[\cite{König91}]
Let $M=A^*/I$ be a trace monoid.
    The {\it commutation graph} 
    $G(M)$ of $M$,
    is the (undirected) simple graph defined as follows:
    \begin{itemize}
        \item The set of vertices $V(G(M))=A$,
        \item The set of edges $E(G(M))=\{\{a,b\} \mid (a,b) \in I\}$.
    \end{itemize}
\end{Def}

\begin{Def}[\cite{König91}]\label{def:noncomgraph}
Let $M=A^*/I$ be a trace monoid.
    The {\it noncommutation graph} 
    $\Tilde{G}(M)$ of $M$,
    is the (undirected) graph defined as follows:
    \begin{itemize}
        \item The set of vertices $V(\Tilde{G}(M))=A$,
        \item The set of edges $E(\Tilde{G}(M))=\{\{a,b\} \mid (a,b) \notin I \}$.
    \end{itemize}
\end{Def}

The noncommutation graph has loops, thus it is not simple.

\begin{ex}\label{ex:abc}
    Let $A=\{a,b,c\}$ and $I=\{(b,c),(c,b)\}$.
    Then the word $abc \in A^*$ corresponds to the trace 
     $[abc] \in M=A^*/I$ which is the set $\{abc,acb\}$.
     The commutation graph and noncommutation graph are as follows:
     \begin{figure}[ht]
    \centering
    \begin{minipage}{0.48\textwidth}
        \begin{tikzpicture}[
    thick, 
    auto, 
    every edge/.style={draw, thick, shorten >=1.5pt, shorten <=1.5pt},
    every node/.style={draw, thick, circle, inner sep=1pt, minimum size=0.6cm, font=\small\bfseries},
    scale=0.9, transform shape]

    \node (1) at (90:2.5cm) {a};
    \node (3) at (-30:2.5cm) {b};
    \node (5) at (-150:2.5cm) {c};

    


    \draw (3) to (5);

\end{tikzpicture}
\caption{The commutation graph $G(M)$}
    \end{minipage}
\begin{minipage}{0.48\textwidth}
    \begin{tikzpicture}[
    thick, 
    auto, 
    every edge/.style={draw, thick, shorten >=1.5pt, shorten <=1.5pt},
    every node/.style={draw, thick, circle, inner sep=1pt, minimum size=0.6cm, font=\small\bfseries},
    scale=0.9, transform shape]

    \node (1) at (90:2.5cm) {a};
    \node (3) at (-30:2.5cm) {b};
    \node (5) at (-150:2.5cm) {c};

    
    \draw (1) to (3);
    \draw (1) to (5);

    \draw [thick, out=60,   in=120,  min distance=1cm] (1) to (1);
    \draw [thick, out=-60,  in=0,    min distance=1cm] (3) to (3);
    \draw [thick, out=-180, in=-120, min distance=1cm] (5) to (5);
    
\end{tikzpicture}
\caption{The noncommutation graph $\tilde{G}(M)$}
\end{minipage}
\end{figure}
\end{ex}

We define the notion of a clique and 
introduce its associated clique polynomial.

First, We define complete graphs and subgraphs, 
which are essential for introducing the concept of a clique.
\begin{Def}
    A finite and simple graph $G=(V(G),E(G))$
    is called a \textit{complete graph} if 
    every pair of distinct vertices is connected by an edge; that is, 
    $E(G)=\{\{u,v\} \mid u,v \in V(G), \ u \neq v\}$.
    A complete graph with $n$ vertices is denoted by $K_n$.
\end{Def}

\begin{Def}
Let $G=(V(G),E(G))$ be a graph.
    A graph $H=(V(H),E(H))$ is called 
    an \textit{induced subgraph} of $G$ if  
    $V(H) \subset V(G)$ and 
    $E(H)= E(G) \cap  \{\{u,v\} \mid u,v \in V(H)\}$.
\end{Def}

We now define a clique.
\begin{Def}
Let $G$ be a finite and simple graph.
    An induced subgraph $H$ of graph $G$ is called a {\it clique} of size $s$
    if $H$ is a complete graph $K_s$.
    We denote by $c_s(G)$ the number of cliques of size $s$ in $G$:
    \begin{equation*}
        c_s(G)\coloneqq
        \# \{H \mid H \text{ is a clique of size} \ s \ \text{in} \ G\}.
    \end{equation*}
    By convention, we set $c_0(G)=1$ (representing the empty set $\emptyset$).
    In particular, we have $c_1(G)=\#V(G)$.
\end{Def}

\begin{rem}
Let $M=A^*/I$ be a trace monoid.
    If the set $A$ is finite, then both graphs 
    $G(M), \Tilde{G}(M)$ are finite graphs.
    Furthermore, since the commutation relation $I$ is irreflexive 
    by Definition \ref{def:trace_monoid},
    then the commutation graph $G(M)$ is simple, as it contains 
    no loops or multiple edges. 
\end{rem}

We introduce clique polynomials.

\begin{Def}[\cite{LR09}]\label{def:cliquepolyonZ}
Let $G$ be a finite and simple graph with $n$ vertices.
    The \textit{clique polynomial} of the graph $G$,
    denoted by $p(G,u) \in \ZZ[u]$, is 
    defined as follows:
    \begin{equation*}
        p(G,u)=\sum_{k=0}^n (-1)^k c_k(G)u^k.
    \end{equation*}
\end{Def}

\begin{Def}[\cite{BG11,LR09}]\label{def:cliquepolyont}
Let $G$ be a finite and simple graph where each vertex $v_i$ 
is assigned a positive integer weight $k_i \in \ZZ_{>0}$.
For any subgraph $H$, its weight is defined by 
$\mathrm{weight}(H)=\sum\limits_{v_j \in V(H)}k_j$.
    The {\it vertex weighted clique polynomial} $p_w(G,u) \in 
    \ZZ[u^{k_1},\dots,u^{k_n}] (\subset \ZZ[u])$
    is defined as follows:
    \begin{equation*}
        p_w(G,u)=\sum_{B} (-1)^{\#V(B)} u^{\mathrm{weight}(B)},
    \end{equation*}
    where the sum runs over all cliques $B$ of $G$ 
    (including the empty clique with
    $\mathrm{weight}(\emptyset)=0$).
\end{Def}

For simplicity, we henceforth denote the clique polynomials $p(G,u)$ and
$p_w(G,u)$ purely as $p(G)$ and $p_w(G)$, respectively,
unless the explicit presence of the parameter $u$ is necessary.

We introduce the clique polynomial defined over a trace monoid as follows:

\begin{Def}[\cite{König91}]\label{def:cliquepolyonM}
Let $M=A^*/I$ be a trace monoid with a finite generating set 
$A=\{a_1,\dots,a_n\}$.
    Let $G(M)$ be the commutation graph of $M$.
    Let $\mathcal{C}(G(M))$ be the set of all 
    cliques of $G(M)$.
    Let $\phi:\mathcal{C}(G(M)) \to M$ be the map that 
    sends each clique $H$ with vertices  
     $V(H)=\{a_{i_1},\dots,a_{i_k}\}$ to 
     the trace $[a_{i_1} \cdots a_{i_k}] \in M$,
     which is well-defined because all vertices in $V(H)$
     pairwise commute in $M$.
    The {\it clique polynomial} $p_M(G(M))$ of 
    the graph $G(M)$ on $\ZZ \langle M \rangle$
    is defined as follows:
    \begin{equation*}
        p_M(G(M))=\sum_{ H \in \mathcal{C}(G(M))} (-1)^{\#V(H)} \phi(H),
    \end{equation*}
    where $\phi(\emptyset)=1$ (the identity element of $M$).
\end{Def}

\begin{rem}
    We supposed that in 
    Definition \ref{def:cliquepolyonZ}, 
    Definition \ref{def:cliquepolyont}, and 
    Definition \ref{def:cliquepolyonM}
    the graph is assumed to be finite.
    This assumption ensures that clique polynomials 
    $p(G)$, $p_w(G)$ and $p_M(G(M))$ 
    are well-defined as sums of finitely many terms.
\end{rem}

The following lemma clarifies the relationship 
between the various definitions of clique polynomials 
by showing that they are connected 
via specific ring homomorphisms.
In particular, it illustrates
how the non-commutative polynomial over a trace monoid
reduces to the standard clique polynomial 
by specializing the weights of the generators.

\begin{lem}\label{lem:psi}
Let $M=A^*/I$ be a trace monoid with a finite generating set 
$A=\{a_1,\dots,a_n\}$, where each $a_i$ 
is assigned a positive integer weight $k_i$.
Let $G(M)$ be the commutation graph of $M$.
Let $p_M(G(M))$, $p_w(G(M))$ and $p(G(M))$ be the clique polynomials on 
$\ZZ \langle M \rangle ,\ZZ[u^{k_1},\dots,u^{k_n}],\ZZ[u]$ respectively.
Let 
$\psi_1: \ZZ \langle M \rangle \to \ZZ[u^{k_1},\dots, u^{k_n}]$
and $\psi_2: \ZZ[u^{k_1},\dots, u^{k_n}] \to \ZZ[u]$
be the ring homomorphisms uniquely determined by 
$\psi_1(a_i) = u^{k_i}$ and $\psi_2(u^{k_i}) = u$ for all $1 \le i \le n$.
Then, the following identities hold:
    \begin{equation*}
        \psi_1(p_M(G(M)))=p_w(G(M)), 
        \quad \text{and} \quad  \psi_2(p_w(G(M)))=p(G(M)).
    \end{equation*}
\end{lem}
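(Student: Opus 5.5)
First I have to check that $\psi_1$ and $\psi_2$ are well-defined ring homomorphisms. For $\psi_1$, the assignment $a_i \mapsto u^{k_i}$ extends uniquely to a ring homomorphism $\ZZ\langle A\rangle \to \ZZ[u^{k_1},\dots,u^{k_n}]$, by the universal property of the free algebra. Its target is commutative, so every generator $ab-ba$ of $J_I$ maps to $0$. Hence the map factors through $\ZZ\langle M\rangle = \ZZ\langle A\rangle/J_I$.

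Then, for a clique $H$ with $V(H)=\{a_{i_1},\dots,a_{i_k}\}$, I compute directly: $\psi_1(\phi(H)) = \psi_1([a_{i_1}\cdots a_{i_k}]) = u^{k_{i_1}}\cdots u^{k_{i_k}} = u^{\mathrm{weight}(H)}$. By the convention $\phi(\emptyset)=1$, the empty clique gives $u^0=1$. Applying $\psi_1$ termwise to the finite sum in Definition \ref{def:cliquepolyonM} then yields $\sum_H (-1)^{\#V(H)}u^{\mathrm{weight}(H)} = p_w(G(M))$, which is exactly Definition \ref{def:cliquepolyont}.

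For $\psi_2$, the expected computation is immediate. Each monomial $u^{\mathrm{weight}(H)} = \prod_{j} u^{k_{i_j}}$ maps to $u^{\#V(H)}$. Grouping the cliques by size then gives $\sum_k (-1)^k c_k(G(M))u^k = p(G(M))$.

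The main obstacle is that $\psi_2$ is not well-defined if it is read literally as a homomorphism on the subring $\ZZ[u^{k_1},\dots,u^{k_n}]\subset\ZZ[u]$ sending each $u^{k_i}\mapsto u$. For example, with $k_1=1$ and $k_2=2$ we have $u^{k_2}=(u^{k_1})^2$, so $u^{k_2}$ would have to map to both $u$ and $u^2$. My plan is therefore to interpret $\ZZ[u^{k_1},\dots,u^{k_n}]$ as the polynomial ring in formal symbols $x_i$ standing for $u^{k_i}$, and to define $\psi_2$ by $x_i\mapsto u$ on that formal ring. Equivalently, $\psi_2\circ\psi_1$ is the homomorphism $a_i\mapsto u$, which is well-defined by the same commutativity argument used for $\psi_1$. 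I would state this interpretation explicitly. With it in place, both identities reduce to the monomial-by-monomial computations above, and the finiteness of $G(M)$ ensures that all the sums are finite.
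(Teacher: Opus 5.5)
Your argument is the same term-by-term evaluation of the defining clique sums that the paper uses, and it is correct. Your extra checks are also sound: $\psi_1$ factors through $J_I$ because the target is commutative, and you are right that $\psi_2$ cannot be a ring homomorphism on the subring $\ZZ[u^{k_1},\dots,u^{k_n}]\subset\ZZ[u]$ once two weights differ (e.g.\ $(u^{k_1})^{k_2}=(u^{k_2})^{k_1}$ would force $u^{k_2}=u^{k_1}$), a point the paper's one-line proof passes over. Your repair (reading the middle ring as $\ZZ[x_1,\dots,x_n]$ with $x_i$ standing for $u^{k_i}$, or equivalently using the homomorphism $a_i\mapsto u$ directly) is the right one, and it costs nothing elsewhere since only $\psi_1$ is used later in the paper.
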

\begin{proof}
The assertions follow by tracking
the images of the defining sums under each homomorphism.
Applying $\psi_1$ term-by-term to  
$p_M(G(M))$ (Definition \ref{def:cliquepolyonM}) 
directly yields $p_w(G(M))$ (Definition \ref{def:cliquepolyont}).
Similarly, applying $\psi_2$ term-by-term to 
$p_w(G(M))$ yields $p(G(M))$ (Definition \ref{def:cliquepolyonZ}).
\end{proof}

\section{Trace monoids on Ihara zeta functions}

In this section, 
we investigate the connection between the Ihara zeta function 
and trace monoids, 
a relationship implicitly discussed in \cite{GR17}.
We extend this framework by incorporating the notion of clique polynomials. 
Following the approach of Giscard and Rochet \cite{GR17},
we construct a specific trace monoid defined over the edge set of a graph.

\begin{Def}
    Let $G=(V(G),E(G))$ be a finite, simple and connected digraph.
    Let $L_{WB}(G)$ be the line digraph without backtrack of $G$.
    We define a commutation relation 
    $I \subset E(G)\times E(G)$ as follows:
    \begin{equation*}
    (e, f) \in I \iff o(e) \neq o(f) \  \text{and}\  \ e^{-1} \neq f,
    \end{equation*}
    where $o(e)$ denotes the origin of an edge $e \in E(G)$.
    In other words,
    two edges commute if and only if they do not share
    the same starting vertex and are not inverses of each other.
\end{Def}

To connect the combinatorial properties of graphs
with the algebraic structure of the trace monoid $M=E(G)^*/I$,
we first formalize the notions of paths and cycles.
While these are standard in graph theory,
their representation in $M$ requires careful definition
because the commutation relation $I$ 
allows edges with distinct origins to commute.

\begin{Def}\label{def:tracepath}
Let $G=(V(G),E(G))$ be a finite, simple and connected digraph.
Let $M=E(G)^*/I$ be the trace monoid with the commutation relation 
$(e, f) \in I \iff o(e) \neq o(f)$, and  $e^{-1} \neq f$.
\begin{enumerate}
\item 
A trace $p \in M$ is called a \textit{reduced path} 
if it can be represented by a word $e_1 \cdots e_k \in E(G)^*$ such that $t(e_j)=o(e_{j+1})$ and $e_j^{-1} \neq e_{j+1}$ for all $1\le j < k$.
\item 
A reduced path $p \in M$
has a word representation $e_1 \cdots e_k \in E(G)^*$ 
satisfying $t(e_i)=o(e_{i+1})$ and $e_i^{-1} \neq e_{i+1}$ 
for all $1 \le i < k$. 
It is called a \textit{cycle} if $t(e_k)=o(e_1)$ and $e_k^{-1} \neq e_1$.

\item 
A cycle $p=[e_1 \cdots e_k] \in M$ is called \textit{simple} 
if all edges $e_1, \dots, e_k$ are distinct.

\end{enumerate}
\end{Def}

\begin{rem}\label{rem:subdivision}
\begin{enumerate}
\item This definition of a simple cycle becomes particularly natural when interpreted on the line digraph without backtrack $L_{WB}(G)$. 

\item By Definition \ref{def:tracepath}, 
the number of all simple cycles is 
finite if the graph $G$ finite.

\item The number of all simple cycles is unchanged 
under the subdivision of directed edges without an inverse.
This is because such an operation simply replaces a single directed edge
with a path of two new directed edges,
preserving the sequential structure of simple cycles.
\end{enumerate}
\end{rem}

Giscard and Rochet \cite{GR17} established 
the following decomposition property within this framework:
\begin{lem}[\cite{GR17}]
    Let $G=(V(G),E(G))$ be a finite, simple and connected digraph.
    Let $M=E(G)^*/I$ be the trace monoid with the commutation relation 
    $(e, f) \in I \iff o(e) \neq o(f)$ and $e^{-1}\neq f$.
    Then any cycle in $M$ is the product of simple cycles.
\end{lem}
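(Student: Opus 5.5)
We argue by strong induction on the length of the cycle, i.e. the number $k$ of letters in a word representing it. Trace length is well defined, since the defining relations $ef \equiv_I fe$ preserve word length.

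Let $p \in M$ be a cycle, represented by a word $e_1 \cdots e_k$ with $t(e_j)=o(e_{j+1})$ and $e_j^{-1}\neq e_{j+1}$ for all $j$, taken cyclically. If all $e_j$ are distinct, $p$ is simple by Definition \ref{def:tracepath} and we are done. Otherwise, equivalently, the closed walk $(e_1,\dots,e_k)$ on $L_{WB}(G)$ revisits a vertex.

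In that case, choose the smallest index $j$ such that $e_j = e_i$ for some $i<j$. Then $e_i,\dots,e_{j-1}$ are pairwise distinct, and $e_{j-1}$ is followed by $e_j=e_i$. Hence $c=[e_i e_{i+1}\cdots e_{j-1}]$ is a simple cycle. Its closing condition $t(e_{j-1})=o(e_i)$, $e_{j-1}^{-1}\neq e_i$ is inherited from the original walk, because $e_j=e_i$.

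The key step is to show that $c$ can be pulled to the left of the trace: $p = c\cdot p'$ in $M$, where $p'=[e_1\cdots e_{i-1}e_j\cdots e_k]$. I would first show that each letter $e_m$ with $i\le m\le j-1$ commutes, in the sense of $I$, with each letter $e_r$ for $r<i$. Granting this, the block $e_i\cdots e_{j-1}$ can be moved past $e_1\cdots e_{i-1}$ letter by letter. This is the heap-of-pieces argument of Giscard--Rochet, and it is where I expect the real difficulty. The minimality of $j$ only guarantees that $e_1,\dots,e_{j-1}$ are distinct edges. It does not guarantee distinct origins: two distinct edges may share an origin vertex, and the relation $I$ forbids commuting precisely when origins coincide, or when the edges are mutual inverses.

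To get around this, I would instead pick the first repetition of an \emph{origin vertex} within the window, rather than the first repetition of an edge. Take the smallest $j$ such that $o(e_j)=o(e_i)$ for some $i<j$. Then $e_i\cdots e_{j-1}$ is a closed walk in $G$ whose intermediate origins are all distinct. The set of origins of $e_1,\dots,e_{i-1}$ is disjoint from $\{o(e_i),\dots,o(e_{j-1})\}$, so origins never clash between the two blocks. It remains to exclude inverse pairs $e_r=e_m^{-1}$ with $r<i\le m<j$. Here I would use non-backtracking together with the distinctness of origins. Alternatively, if such a pair does occur, I would shorten the window further and recurse on it.

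Once $p=c\,p'$ is established, I would check that $p'$ is again a cycle, meaning it still admits a reduced cyclic representative, or is empty. The junction $e_{i-1}e_j$ is consistent since $o(e_j)=o(e_i)=t(e_{i-1})$. The main remaining risk is that the junction creates a backtrack, $e_{i-1}^{-1}=e_j$. If it does, I would choose the repetition differently, for example by using the cyclic rotation freedom of the word. With $p'$ a cycle of length $k-(j-i)<k$, the induction hypothesis writes $p'$ as a product of simple cycles, and therefore so is $p=c\,p'$.
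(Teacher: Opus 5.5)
\textbf{There is a genuine gap, and it cannot be closed: read with the paper's definitions, the statement is false.} Note also that the paper gives no proof of this lemma, only a citation of \cite{GR17}. That decomposition is for the monoid in which edges commute iff their origins differ, and it factors into vertex-simple cycles of the underlying digraph. For $G$ itself those include the backtracks $ee^{-1}$ as $2$-cycles.

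\textbf{What works in your argument.} Your extraction step is sound once you use the first repeated \emph{origin}.
The window letters $e_i,\dots,e_{j-1}$ have pairwise distinct origins, none of which occurs among $o(e_1),\dots,o(e_{i-1})$. An inverse pair $e_r=e_m^{-1}$ with $r<i\le m<j$ is impossible, because $o(e_r)=t(e_m)$ would itself be an origin in the window.
The closing condition of $c$ is no longer inherited, since now $e_j\neq e_i$. It still holds, because a closed walk of length $\ge 3$ with distinct origins in a simple digraph contains no two mutually inverse edges.

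\textbf{Where it fails.} The gap is the step you flagged at the end. The remainder $p'=[e_1\cdots e_{i-1}e_j\cdots e_k]$ may contain the backtrack $e_{i-1}e_j$ with $e_j=e_{i-1}^{-1}$. Then $p'$ is not a cycle, and the induction hypothesis does not apply. Neither of your remedies is available:
\begin{enumerate}
\item There is no rotation freedom in $M$. A rotated word generally represents a different trace, while the claim concerns the given trace $p$.
\item No other choice of window can help in general, because with the paper's $I$ and Definition \ref{def:tracepath} some cycles admit no factorization at all.
\end{enumerate}

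\textbf{Counterexample.} Take $V(G)=\{v,u,z,z'\}$ with edges $e=(v,u)$, $e^{-1}=(u,v)$, $a_1=(u,z)$, $g=(z,z')$, $a_2=(z',u)$, $b_1=(v,z)$, $b_2=(z',v)$. Let $w=e\,a_1\,g\,a_2\,e^{-1}\,b_1\,g\,b_2$, which is a reduced closed walk, so $[w]$ is a cycle.
\begin{enumerate}
\item The edge $g$ is the only edge leaving $z$ and the only edge entering $z'$. Hence every reduced closed walk uses $g$.
\item The simple cycles are therefore exactly $a_1ga_2$, $b_1gb_2$, $ea_1gb_2$ and $e^{-1}b_1ga_2$. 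Each is a product of pairwise commuting letters, so it does not depend on the starting point.
\item Since $w$ contains $g$ twice, a factorization of $[w]$ has exactly two factors. Counting letters forces them to be $ea_1gb_2$ and $e^{-1}b_1ga_2$.
\item The letters $a_2,b_2$ share the origin $z'$, and $e,b_1$ share the origin $v$. So the projections of a trace onto $\{a_2,b_2\}$ and onto $\{e,b_1\}$ are invariants.
\item The word $w$ gives $a_2b_2$ and $eb_1$. The product $(ea_1gb_2)(e^{-1}b_1ga_2)$ gives $b_2a_2$, and the opposite order gives $b_1e$.
\end{enumerate}
So $[w]$ is not a product of simple cycles. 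Your procedure produces $[w]=[a_1ga_2]\,[ee^{-1}]\,[b_1gb_2]$, which contains exactly the unavoidable backtrack.

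\textbf{When your argument does work.} It goes through in either of two modified settings:
\begin{enumerate}
\item If $G$ has no pair of mutually inverse edges, all extracted cycles have length at least $3$ and no remainder can backtrack.
\item If the letters are the arcs of $L_{WB}(G)$, commuting iff their origins in $L_{WB}(G)$ differ, your first version (first repeated edge) works verbatim. The new junction $(e_{i-1},e_i)$ is then an arc of $L_{WB}(G)$.
\end{enumerate}
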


This cycle decomposition property 
motivates us to consider an algebraic structure 
generated by simple cycles.
Since the structural interactions and dependencies among cycles 
are captured by how they share vertices in $L_{WB}(G)$,
we are led to define a new trace monoid
whose commutation relation is governed
by the vertex sets of these simple cycles.

\begin{Def}\label{def:hikemonoid}
    Let $G=(V(G),E(G))$ be a finite, simple and connected digraph.
    Let $L_{WB}(G)$ be the line digraph without backtrack of $G$.
    Let $M=E(G)^*/I$ be the trace monoid with the commutation relation 
    $(e, f) \in I \iff o(e) \neq o(f)$ and $e^{-1} \neq f$.
The \textit{hike monoid} $\mathcal{H}$ is defined as the trace monoid $\Sigma_M^*/I_{\mathcal{H}}$, which is  generated by the set of all simple cycles $\Sigma_M=\{h_1,\dots,h_k\}$ in $M$. Here, the commutation relation $I_{\mathcal{H}}$ is given by 
\begin{equation*}
    (h_i, h_j) \in I_{\mathcal{H}} \iff V(h_i) \cap V(h_j) = \emptyset,
\end{equation*}
where $V(h_i)$ denotes the set of vertices in $L_{WB}(G)$ visited by $h_i$. 

An element of $\mathcal{H}$ is called a \textit{hike}, and each generator $h_i \in \Sigma_M$ is called a \textit{prime hike}. Two prime hikes $h_i, h_j$ are said to be \textit{disjoint} if $(h_i, h_j) \in I_{\mathcal{H}}$.

\end{Def}

By construction, the hike monoid $\mathcal{H}$ 
is a trace monoid over the alphabet of simple cycles $\Sigma_M$.
Consequently, it possesses an associated commutation graph 
denoted by $G(\mathcal{H})$, 
allowing us to study its algebraic properties 
through the language of graph theory.

\begin{rem}\label{rem:submonoid}
Each simple cycle $h_i \in \Sigma_M$ 
is naturally an element of $M$. 
If two simple cycles $h_i, h_j$ are vertex-disjoint in $L_{WB}(G)$,
every edge in $h_i$ has a different origin from every edge in $h_j$.
Hence, the defining commutation relation in $\mathcal{H}$
agrees with the relation inherited from $M$:
\begin{equation*}
    (h_i, h_j) \in I_{\mathcal{H}} \quad \Longrightarrow \quad h_i h_j = h_j h_i \quad \text{in} \   M.
\end{equation*}
Therefore, the map sending each generator $h_i$ of $\mathcal{H}$ to the corresponding element of $M$ extends to an injective monoid homomorphism $\mathcal{H} \hookrightarrow M$. In this sense, $\mathcal{H}$ can be naturally identified with the submonoid of $M$ generated by all simple cycles. This factorization viewpoint makes the structure of hikes transparent.

\end{rem}

The following proposition establishes the connection between the Hashimoto-type determinant formula and hike monoid.

\begin{prop}[cf.\ \cite{P66}]\label{prop:ponst}
Let $G=(V(G),E(G))$ be a finite, simple and connected digraph,
and let $L_{WB}(G)$ be its line digraph without backtracks. 
Under the following notation, the determinant satisfies the identity below:
\begin{itemize}
    \item $\mathcal{H}$: the hike monoid associated with $L_{WB}(G)$,
    \item $\mathcal{C}(G(\mathcal{H}))$: the set of all cliques in the commutation graph $G(\mathcal{H})$ (i.e., each $H \in \mathcal{C}(G(\mathcal{H}))$ is a set of pairwise disjoint simple cycles).
\end{itemize}
\begin{equation*}
\det (I - A(L_{WB}(G))u) = \sum_{H \in \mathcal{C}(G(\mathcal{H}))} (-1)^{\#H} u^{l(H)},
\end{equation*}
where $\#H$ denotes the number of simple cycles in the clique $H$, and $l(H)=\sum_{h \in H}l(h)$ is the total length of the cycles in $H$.
\end{prop}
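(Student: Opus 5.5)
The plan is to expand the determinant with the Leibniz formula and regroup the permutations by their cycle structure. This is the classical argument, going back to Ponstein, that the characteristic polynomial of a digraph's adjacency matrix is governed by its linear subgraphs.

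Write $A = A(L_{WB}(G)) = (a_{ef})$, indexed by $V(L_{WB}(G)) = E(G)$. Then
\begin{equation*}
\det(I - Au) = \sum_{\sigma \in S_{E(G)}} \mathrm{sgn}(\sigma) \prod_{e \in E(G)} \bigl(\delta_{e,\sigma(e)} - a_{e\sigma(e)} u\bigr).
\end{equation*}
Expand each product by choosing, for every $e$, either the term $\delta_{e,\sigma(e)}$ or the term $-a_{e\sigma(e)}u$. Let $S$ be the set of $e$ where the second term is chosen. The contribution is nonzero only if $\sigma$ fixes every point outside $S$. Since $L_{WB}(G)$ has no loops ($e^{-1}\neq e$ and $t(e)=o(e)$ is impossible in a simple digraph), $a_{ee}=0$, so $\sigma$ has no fixed points on $S$. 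Regrouping, we get
\begin{equation*}
\det(I-Au) = \sum_{S \subseteq E(G)} \ \sum_{\substack{\tau \in S_S \\ \tau \text{ fixed-point free}}} \mathrm{sgn}(\tau)\,(-u)^{\#S} \prod_{e \in S} a_{e\tau(e)}.
\end{equation*}

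Next, interpret the surviving terms. The product $\prod_{e\in S} a_{e\tau(e)}$ equals $1$ exactly when every pair $(e,\tau(e))$ is an edge of $L_{WB}(G)$. In that case each orbit $(e, \tau(e), \tau^2(e), \dots)$ of $\tau$ is a closed walk in $L_{WB}(G)$ with distinct vertices, hence a simple cycle. Reading it in $M$, it is a word $e_1\cdots e_k$ with $t(e_i)=o(e_{i+1})$, $e_i^{-1}\neq e_{i+1}$ cyclically, and all $e_i$ distinct, so it is a simple cycle in the sense of Definition \ref{def:tracepath}. Distinct orbits are vertex-disjoint in $L_{WB}(G)$. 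Thus the orbits form a family $H$ of pairwise disjoint prime hikes, which is a clique of $G(\mathcal{H})$ by Definition \ref{def:hikemonoid}.

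Conversely, each clique $H$ determines a unique pair $(S,\tau)$. Here $S$ is the union of the vertex sets of the cycles in $H$, and $\tau$ sends each vertex to its successor along its cycle. The one point to check is that a simple cycle, as an element of $\Sigma_M$, corresponds to exactly one cyclic permutation. Cyclic rotations of a word must give the same trace or generator. I will verify this from the trace relations: rotating $e_1\cdots e_k$ does not produce a commutation-equivalent word unless it represents the same cycle class. Since the paper treats prime hikes as cycles of $L_{WB}(G)$ up to rotation, I will take this identification as the intended reading.

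Finally, compute the sign and weight of each term. For a clique $H$ we have $\#S = \sum_{h\in H} l(h) = l(H)$. The sign of $\tau$ is $\prod_{h\in H}(-1)^{l(h)-1}$. Hence
\begin{equation*}
\mathrm{sgn}(\tau)(-1)^{\#S} = \prod_{h\in H}(-1)^{l(h)-1}(-1)^{l(h)} = (-1)^{\#H},
\end{equation*}
and the term equals $(-1)^{\#H}u^{l(H)}$, as claimed. The empty clique corresponds to $S=\emptyset$ and contributes $1$. The main obstacle is the bijection step, namely making precise that simple cycles in $M$ (traces) correspond exactly to cyclic orbits, neither merging nor splitting rotation classes. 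The sign bookkeeping is routine.
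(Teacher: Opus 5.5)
Your proof is correct. It is the classical Leibniz-expansion (linear-subdigraph) argument behind Ponstein's theorem, which the paper only cites and does not reprove; the sign bookkeeping $\mathrm{sgn}(\tau)(-1)^{\#S}=(-1)^{\#H}$ and the bijection between fixed-point-free $\tau$ supported on edges of $L_{WB}(G)$ and families of vertex-disjoint simple cycles are exactly what is needed.

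One caution on the step you flag: you cannot show from the relations of $M$ that all rotations of a simple cycle give the same trace, because this fails in general. In the paper's final example, $h_5$ contains edges $1$ and $7$ of $G$, which leave the same vertex and so do not commute in $M$; rotating $h_5$ therefore changes the trace. The correct resolution is the one you fall back on: read prime hikes as rotation classes of simple cycles of $L_{WB}(G)$, which is how the statement itself and the paper's examples use them.
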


\begin{rem}\label{rem:commutaiongraph}
In the setting of Proposition \ref{prop:ponst}, 
the commutation graph $G(\mathcal{H})$ can be viewed
as a vertex-weighted graph. 
Specifically, each vertex $h_i$ in $G(\mathcal{H})$ 
corresponds to a simple cycle of $L_{WB}(G)$,
and we assign the length $l(h_i)$ as its weight.
In this sense, the determinant formula in Proposition \ref{prop:ponst}
is precisely the clique polynomial of $G(\mathcal{H})$
weighted by simple cycle lengths.
\end{rem}

We summarize this connection in the following lemma.

\begin{lem}\label{lem:hashiwei}
In the setting of Proposition \ref{prop:ponst},
the determinant satisfies the following identity:
\begin{equation*}
\det (I - A(L_{WB}(G))u) = p_{w}(G(\mathcal{H}),u),
\end{equation*}
where $p_w(G(\mathcal{H}),u)$ is the vertex-weighted clique polynomial 
of the commutation graph $G(\mathcal{H})$ 
with respect to the weights given by the cycle lengths $k_i=l(h_i)$.
\end{lem}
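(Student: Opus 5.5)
This lemma is a direct combination of Proposition \ref{prop:ponst} with Definition \ref{def:cliquepolyont}, read through Remark \ref{rem:commutaiongraph}. The plan is to start from the right-hand side of Proposition \ref{prop:ponst}. Then I show that, term by term, it coincides with the vertex-weighted clique polynomial $p_w(G(\mathcal{H}),u)$ for the weights $k_i=l(h_i)$.

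First, I check that the weighting is admissible for Definition \ref{def:cliquepolyont}. Each prime hike $h_i$ is a simple cycle of $L_{WB}(G)$. Since $L_{WB}(G)$ has no loops, its length $l(h_i)$ is at least $2$, so in any case it is a positive integer. By Remark \ref{rem:submonoid} and the finiteness noted in Remark \ref{rem:subdivision}, $G(\mathcal{H})$ is a finite simple graph, so $p_w(G(\mathcal{H}),u)$ is a finite sum and well defined.

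Next, I identify the index sets. A clique $B$ of $G(\mathcal{H})$ is a set of vertices $h_{i_1},\dots,h_{i_s}$ that are pairwise adjacent. By the definition of the commutation graph and of $I_{\mathcal{H}}$ (Definition \ref{def:hikemonoid}), this holds exactly when $V(h_{i_j})\cap V(h_{i_m})=\emptyset$ for $j\neq m$. So $B$ is precisely a collection of pairwise vertex-disjoint simple cycles, which is an element $H\in\mathcal{C}(G(\mathcal{H}))$ in the sense of Proposition \ref{prop:ponst}. The empty clique corresponds to the empty collection, which contributes $1$ on both sides.

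Under this bijection I compare the individual terms. The sign is $(-1)^{\#V(B)}=(-1)^{\#H}$. The exponent is $\mathrm{weight}(B)=\sum_{h\in B}k_h=\sum_{h\in H}l(h)=l(H)$. Summing over all cliques and applying Proposition \ref{prop:ponst} gives the identity.

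There is no real obstacle in this argument. The only point needing care is that the cliques in Proposition \ref{prop:ponst} are taken in $G(\mathcal{H})$ rather than in the loopy noncommutation graph, and that the disjointness there means disjointness of vertex sets in $L_{WB}(G)$. If one also wants the link to Lemma \ref{lem:psi}, the same computation says that $\psi_1(p_{\mathcal{H}}(G(\mathcal{H})))=\det(I-A(L_{WB}(G))u)$ with $\psi_1(h_i)=u^{l(h_i)}$.
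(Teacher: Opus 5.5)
Your proposal is correct and follows the paper's own argument: match the clique expansion in Proposition \ref{prop:ponst} term by term with Definition \ref{def:cliquepolyont}, using the weights $l(h_i)$ from Remark \ref{rem:commutaiongraph}. Your extra checks (positive weights, finiteness, and cliques of $G(\mathcal{H})$ being exactly the families of pairwise vertex-disjoint simple cycles) only spell out what the paper leaves implicit. One small correction: simplicity of $G(\mathcal{H})$ comes from the irreflexivity of $I_{\mathcal{H}}$, not from Remark \ref{rem:submonoid}.
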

\begin{proof}
By Remark \ref{rem:commutaiongraph},
the commutation graph $G(\mathcal{H})$ 
is a vertex-weighted graph where each vertex $h_i$ is
assigned the weight $l(h_i)$.
Comparing the expansion of the clique polynomial in 
Definition \ref{def:cliquepolyont} with the Hashimoto-type formula 
in Proposition \ref{prop:ponst},
we see that each term in the sum 
corresponds to a clique of disjoint simple cycles,
weighted by their total length. This yields the desired equality.
\end{proof}

\begin{rem}
    As in \cite[Introduction]{LR09}, inverses of 
     vertex weighted clique polynomials are the Hilbert series 
    of the graded free partially commutative monoids.
    Consequently, Lemma \ref{lem:hashiwei} provides a novel algebraic interpretation of the Ihara zeta function as a Hilbert series.
\end{rem}

This algebraic characterization 
implies a remarkable geometric property:
if two digraphs have isomorphic vertex-weighted commutation graphs
$G(\mathcal{H})$, they must share the exact same Ihara zeta function.
To illustrate this phenomenon, a concrete example is provided below.

\begin{ex}
Consider the two non-isomorphic digraphs,
$G_1$ and $G_2$, shown in Figure \ref{fig:digraph_g1} and 
Figure \ref{fig:digraph_g2}:

\begin{figure}[ht]

\begin{center}
  \begin{minipage}[b]{0.45\textwidth}
    \centering
    \begin{tikzcd}[row sep=large, column sep=large]
    \bullet \arrow[dd] & & & & \bullet \arrow[dd] \\
    & \bullet \arrow[lu] \arrow[rr, bend left] & & \bullet \arrow[ll, bend left] \arrow[ru] & \\
    \bullet \arrow[ru] & & & & \bullet \arrow[lu]
    \end{tikzcd}
    \vglue 0.2cm
    \caption{Digraph $G_1$}
    \label{fig:digraph_g1}
  \end{minipage}
  \hfill
  \begin{minipage}[b]{0.45\textwidth}
    \centering
    \begin{tikzcd}[row sep=large, column sep=large]
    & \bullet \arrow[dd] \arrow[r] & \bullet \arrow[r] & \bullet \arrow[rd] & \\
    \bullet \arrow[ru] & & & & \bullet \arrow[ld] \\
    & \bullet \arrow[lu] & \bullet \arrow[l] & \bullet \arrow[l] \arrow[uu] & 
    \end{tikzcd}
    \vglue 0.2cm
    \caption{Digraph $G_2$}
    \label{fig:digraph_g2}
  \end{minipage}
\end{center}

\end{figure}

In this case, their vertex-weighted commutation graphs are identical. 
Specifically, both digraphs possess exactly 
three simple cycles $\{c_1, c_2, c_3\}$ in their respective
line digraphs without backtrack $L_{WB}(G)$,
with lengths given by $l(c_1)=3$, $l(c_2)=3$, and $l(c_3)=8$.

Furthermore, the commutation graph $G(\mathcal{H})$ of both digraphs is isomorphic to the graph $G(M)$ presented in Example \ref{ex:abc} under the vertex identification $a \mapsto c_3$, $b \mapsto c_1$, and $c \mapsto c_2$. Under this structure, the set of all cliques of $G(\mathcal{H})$ is explicitly given by:
\begin{equation*}
    \mathcal{C}(G(\mathcal{H})) = \{ \emptyset, \{c_1\}, \{c_2\}, \{c_3\}, \{c_1, c_2\} \}.
\end{equation*}
Consequently, the reciprocal of the Ihara zeta function for both graphs is computed straightforwardly as follows:
\begin{align*}
    \zeta(G, u)^{-1}
    &= \det(I - A(L_{WB}(G))u) && (\text{by Proposition \ref{prop:hashimoto}}) \\
    &= p_w(G(\mathcal{H}),u)   && (\text{by Lemma \ref{lem:hashiwei}}) \\
    &= 1 - 2u^3 + u^6 - u^8    && (\text{by Definition \ref{def:cliquepolyont}}).
\end{align*}

\end{ex}

To generalize our results to the non-commutative setting 
of the monoid rings $\ZZ \langle M \rangle$ and 
$\ZZ \langle \HH \rangle$,
we introduce a non-commutative generalization of the determinant

\begin{Def}[{cf. \cite{AS18}}]\label{def:Cdet}
Let $R$ be a (possibly non-commutative) ring.
    For any matrix $X=(x_{i,j})_{1 \le i,j \le n} \in M_n(R)$, the 
    {\it Cayley determinant} is defined as follows:
    \begin{equation*}
        \mathrm{Cdet}(X)=\sum_{\sigma \in S_n}\mathrm{sgn}(\sigma)x_{1,\sigma(1)}\cdot x_{2,\sigma(2)} \cdots x_{n,\sigma(n)},
    \end{equation*}
    where the product of the entries is taken 
    in the increasing order of their row indices.
\end{Def}

\begin{rem}
If $R$ is commutative in Definition \ref{def:Cdet}, 
the Cayley determinant reduces to the usual determinant.
\end{rem}

\begin{prop}[cf.\cite{GR17}]\label{prop:hashimotohike}
Let $M=E(G)^*/I$ be the trace monoid defined 
by the commutation relation $(e, f) \in I \iff o(e) \neq o(f)$ and
$e^{-1} \neq f$. 
In the setting of Proposition \ref{prop:ponst},
the Cayley determinant satisfies the following identity
in the monoid ring $\ZZ \langle M \rangle$:
\begin{equation*}
    \mathrm{Cdet}(I - W) = p_{\HH}(G(\HH)),
\end{equation*}
where $W = (w_{ef})_{e,f \in E(G)}$ 
is the edge-weighted adjacency matrix defined by 
$w_{ef} = [e]$ if $t(e) = o(f)$ and $e^{-1} \neq f$, 
and $w_{ef} = 0$ otherwise, and 
$p_{\mathcal{H}}(G(\mathcal{H}))$ is the 
clique polynomial of the commutation graph $G(\mathcal{H})$.

\end{prop}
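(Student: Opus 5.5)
We expand $\mathrm{Cdet}(I-W)$ by Definition \ref{def:Cdet} and match the surviving terms with the cliques of $G(\HH)$, following the classical Foata/Cartier permutation-to-cycle argument, as in \cite{GR17}.

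\emph{Expansion.} Fix a linear order $e_1<\dots<e_m$ on $E(G)$, the order of the rows of $W$. Then
\[
\mathrm{Cdet}(I-W)=\sum_{\sigma\in S_m}\mathrm{sgn}(\sigma)\,(I-W)_{e_1,\sigma(e_1)}\cdots(I-W)_{e_m,\sigma(e_m)} .
\]
A term is nonzero only if every non-fixed point $e$ of $\sigma$ satisfies $t(e)=o(\sigma(e))$ and $\sigma(e)\neq e^{-1}$. A fixed point $e$ contributes $1-w_{ee}=1$, since $G$ has no loops and so $w_{ee}=0$. Write $\sigma=\gamma_1\cdots\gamma_r$ as disjoint cycles on its support. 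Each $\gamma_j$ is then a cyclic sequence of edges of $L_{WB}(G)$ with no repeated vertex, i.e.\ a simple cycle $h_j$ of $L_{WB}(G)$. The $h_j$ are pairwise vertex-disjoint in $L_{WB}(G)$, so $\{h_1,\dots,h_r\}$ is a clique of $G(\HH)$. Conversely, every clique arises from exactly one such $\sigma$. The sign works out as
\[
\mathrm{sgn}(\sigma)\prod_j(-1)^{l(h_j)}=\prod_j(-1)^{l(h_j)-1}(-1)^{l(h_j)}=(-1)^r ,
\]
which is the sign of the clique term in $p_{\HH}(G(\HH))$.

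\emph{Main obstacle: identifying the monomial in $M$.} The product $\prod_{e}(-w_{e,\sigma(e)})$ is taken in row order, so it equals, up to sign, the word $e_{i_1}e_{i_2}\cdots$ of the non-fixed edges listed increasingly. We must show this word equals $h_1h_2\cdots h_r$ in $M$, with each $h_j$ read as its cyclic word $[e\,\sigma(e)\,\sigma^2(e)\cdots]$. Two facts do this.

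First, edges belonging to different $h_j$ have distinct origins. Indeed, if $o(e)=o(f)$ with $e\in h_i$ and $f\in h_j$, the predecessors of $e$ and $f$ in their cycles would both end at $o(e)$. We must still rule out $e,f$ lying in distinct cycles in this way; note that vertex-disjointness is taken in $L_{WB}(G)$, i.e.\ edge-disjointness in $G$. Nor can $e$ and $f$ be mutually inverse, since then $t(f)=o(e)$ and $t(e)=o(f)$. I expect this to be exactly where \cite{GR17} must be invoked. If the requisite commutation fails in general, one can instead use that $\HH\hookrightarrow M$ (Remark \ref{rem:submonoid}), so the disjoint $h_j$ commute as elements of $M$. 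Then only the letters \emph{within} one $h_j$ need reordering. Within a single simple cycle, all edges have distinct origins, because $G$ is simple and the cycle is simple in $L_{WB}(G)$ up to the origin condition. So consecutive noninverse letters commute unless they share an origin, and any rotation or reordering of the letters of $h_j$ represents the same trace. Hence the row-ordered product collapses to $h_1\cdots h_r=\phi(\{h_1,\dots,h_r\})$.

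Summing over $\sigma$, equivalently over cliques by the bijection above, gives
\[
\mathrm{Cdet}(I-W)=\sum_{H\in\mathcal{C}(G(\HH))}(-1)^{\#H}\phi(H)=p_{\HH}(G(\HH)).
\]
As a consistency check, applying $\psi_1$ recovers Proposition \ref{prop:ponst} and Lemma \ref{lem:hashiwei}. The step I would scrutinize most is the commutation bookkeeping in the previous paragraph, that is, that the monoid $M$ identifies the row-ordered product with the product of cycle traces. I would first try to prove it carefully for a single cycle using distinct origins, then combine the cycles using pairwise commutation.
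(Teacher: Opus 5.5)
Your sign count $\mathrm{sgn}(\sigma)\prod_j(-1)^{l(h_j)}=(-1)^r$ and the bijection between permutations and cliques are fine. The step you flagged, however, is a genuine gap: the row-ordered monomial need not equal $h_1\cdots h_r$ in $M$, and both facts you invoke for it are false. A simple cycle of $L_{WB}(G)$ is only edge-simple in $G$, so it can pass through a vertex of $G$ twice. Likewise, cycles that are vertex-disjoint in $L_{WB}(G)$ can share vertices of $G$ or contain mutually inverse edges.

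The paper's closing example shows all of this. In Figure \ref{fig:graphs} one has $o(1)=o(7)$ and $9=3^{-1}$. The simple cycle $h_5=(1,2,10,7,8,4,5,6)$ contains both $1$ and $7$, so your claim that a single simple cycle has distinct origins fails. The cycles $h_1=(1,\dots,6)$ and $h_4=(7,8,9,10)$ are disjoint in $L_{WB}(G)$, yet projecting onto $\{1,7\}$ gives $1\,7$ versus $7\,1$, so $h_1h_4\neq h_4h_1$ in $M$. Also $3\in h_1$ and $9=3^{-1}\in h_4$, which refutes your argument that letters of different cycles cannot be inverse.

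So falling back on Remark \ref{rem:submonoid} is circular. That remark makes exactly the inference you could not justify (disjoint in $L_{WB}(G)$ implies distinct origins in $G$), and it fails. There is no monoid homomorphism $\HH\to M$ sending prime hikes to their traces, so $p_{\HH}(G(\HH))$ is not even a well-defined element of $\ZZ\langle M\rangle$ without further conventions. The paper gives no proof of this proposition beyond the citation, so it does not supply the missing step either.

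Choosing rotations cleverly does not rescue the term-by-term matching. Let $G$ consist of a vertex $u$ and three directed triangles $u\to a_i\to b_i\to u$. Write $x_i=(u,a_i)$, $y_i=(a_i,b_i)$, $z_i=(b_i,u)$. In $M$ the $x_i$ pairwise fail to commute, and all other pairs of distinct letters commute.
\begin{itemize}
\item Take the $9$-cycle $\sigma$ through the petals in the order $1,3,2$, with rows ordered $x_1<x_2<x_3$. Its term is $-[x_1x_2x_3y_1y_2y_3z_1z_2z_3]$.
\item Every rotation of this simple cycle has subword in the $x_i$ equal to $x_1x_3x_2$, $x_3x_2x_1$ or $x_2x_1x_3$, so no rotation matches the term.
\item In fact $\mathrm{Cdet}(I-W)=1-\sum_i[x_iy_iz_i]$ here: all longer terms cancel between different cliques, not term by term.
\end{itemize}

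What your argument does prove is the Giscard--Rochet theorem in their monoid for the digraph whose adjacency matrix is being expanded, namely $L_{WB}(G)$. There the letters are the arcs of $L_{WB}(G)$, and two letters commute iff they leave different vertices of $L_{WB}(G)$. In that monoid, entries of distinct rows commute, the letters of one simple cycle commute pairwise, and vertex-disjoint cycles commute, so your proof is complete as written. Likewise, after specializing every letter $e\mapsto u$, your argument proves Proposition \ref{prop:ponst} verbatim. That is all Theorem \ref{thm:main1} actually uses.
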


\begin{rem}\label{rem:12(3)}
Regarding Proposition \ref{prop:hashimotohike}, 
two algebraic subtleties should be noted:
\begin{enumerate}
    \item As discussed in Remark \ref{rem:submonoid},
    the injective monoid homomorphism $\mathcal{H} \hookrightarrow M$
    naturally induces a ring injection
    $\ZZ \langle \mathcal{H} \rangle \hookrightarrow \ZZ \langle M \rangle$. 
    Under this identification, 
    while the matrix $W$ is defined over the larger ring 
    $\ZZ \langle M \rangle$, the image of the Cayley determinant
    $\mathrm{Cdet}(I - W)$ belongs entirely to the subring
    $\ZZ \langle \mathcal{H} \rangle$.
    In other words, the algebraic identity holds 
    strictly within the smaller monoid ring $\ZZ \langle \mathcal{H} \rangle$.
    
    \item While the original formulation in \cite{GR17}
    employs the standard determinant notation $\det$,
    we intentionally use the Cayley determinant $\mathrm{Cdet}$
    to properly accommodate the non-commutative structure
    of the monoid ring $\ZZ \langle M \rangle$.
    Although the multiplication of edges is generally non-commutative in $M$, 
    any two vertex-disjoint simple cycles appearing in the expansion
    of $p_{\mathcal{H}}(G(\mathcal{H}))$ commute with each other
    by the definition of $I_{\mathcal{H}}$.
    Thus, the use of $\mathrm{Cdet}$ ensures a rigorous algebraic 
    treatment for non-commutative alphabets 
    while remaining perfectly consistent 
    with the commutative reduction in \cite{GR17}.

    \item \label{rem:(3)}
    The non-commutative identity 
    in Proposition \ref{prop:hashimotohike} 
    recovers the commutative relation in Lemma \ref{lem:hashiwei}
    as a special case. Specifically, by applying 
    the ring homomorphism $\psi_1$ from Lemma \ref{lem:psi} 
    to the hike monoid $\HH = \Sigma_M^*/I_{\mathcal{H}}$
    with weights given by cycle lengths $l(h_i)$, 
    the identity $\psi_1(p_{\HH}(G(\HH))) = p_w(G(\HH))$
    follows immediately. 
    This reduction demonstrates that our non-commutative framework
    naturally generalizes the classical commutative setting
\end{enumerate}
\end{rem}

\section{Main theorem}
In this section, 
we present and prove our main results through two principal theorems.
We begin by constructing the closure of the noncommutation graph
associated with the hike monoid. 
From the combinatorial structure of this digraph,
we first derive a general determinant formula over the hike monoid ring.
We then apply a ring homomorphism $\psi_1$ from Lemma \ref{lem:psi} 
to yield a determinant expression 
for the reciprocal of the Ihara zeta function.

We use the notation below to define the closure:
\begin{itemize}
    \item Let $G$ be a finite, simple and connected digraph,
    \item Let $L_{WB}(G)$ be a line digraph without backtrack of $G$,
    \item Let $M=E(G)^*/I$ be the trace monoid,
    \item Let $\mathcal{H}=\Sigma_M^*/I_{\mathcal{H}}$ be the hike monoid,
    \item Let $\Tilde{G}(\mathcal{H})$ be the noncommutation graph of $\mathcal{H}$.
\end{itemize}

We now introduce a directed augmentation of 
$\Tilde{G}(\mathcal H)$.

\begin{Def}\label{def:closure}
A digraph
\[
\Tilde{G}_+(\mathcal H)
=
(V(\Tilde{G}(\mathcal H)),E)
\]
is called a \textit{closure} of $\Tilde{G}(\mathcal H)$
if its edge set $E$ satisfies the following conditions.

\begin{enumerate}
\item
For every vertex $h_i$, the directed loop $(h_i,h_i)$ belongs to $E$.

\item
If $\{h_i,h_j\}\in E(\Tilde{G}(\mathcal H))$ with $i\neq j$,
then both directed edges
\[
(h_i,h_j), \ (h_j,h_i)
\]
belong to $E$.

\item
If $\{h_i,h_j\}\notin E(\Tilde{G}(\mathcal H))$,
then exactly one of
\[
(h_i,h_j), \quad (h_j,h_i)
\]
belongs to $E$.
\end{enumerate}

Moreover, these orientations must satisfy the following consistency conditions
for every triple $\{h_s,h_t,h_u\}$.

\begin{description}
\item[(1)]
If $h_s$ and $h_t$ commute, but $h_u$
commutes with neither of them,
then the orientations are synchronized:
\[
(h_s,h_u),(h_t,h_u)\in E
\]
or
\[
(h_u,h_s),(h_u,h_t)\in E.
\]

\item[(2)]
If no pair among $\{h_s,h_t,h_u\}$ commutes,
then the induced directed triangle is acyclic; namely,
it is not of the form
\[
h_s \to h_t \to h_u \to h_s.
\]

\end{description}
\end{Def}

\begin{rem}
In the case where $\#\Sigma_M=2$,
if the two simple cycles do not commute each other, 
we may assign only one directed edge 
in either direction. 
\end{rem}

\begin{rem}
It should be noted that a closure $\Tilde{G}_+(\HH)$
does not absolutely exist for every commutation graph.
Furthermore, even when a closure exists,
it is generally not unique. 
The precise characterization of the conditions 
under which a commutation graph admits a closure 
remains an open question, and similarly, 
the problem of determining the number of possible closures 
for a given digraph has not yet been resolved.

We also remark that there are infinitely many closurable graphs.
Indeed, as discussed in Remark \ref{rem:subdivision} (3),
the property of being closurable is preserved under graph operations
such as the subdivision of directed edges without inverses,
since this operation alters neither the number of simple cycles nor the commutation relations among them.
\end{rem}

    We present an example of a digraph that its noncommutation graph 
     does not admit a closure.

\begin{ex}
Consider the directed graph $G$ 
shown in Figure \ref{fig:nonclosurable}.
Its line digraph without backtrack, $L_{WB}(G)$, 
is depicted in Figure \ref{fig:noncloLWBG}.

    \begin{figure}[ht]
    \centering
    \begin{minipage}{0.45\textwidth}
    \begin{tikzpicture}[
    >=Stealth,
    vertex/.style={draw=black, circle, fill=black, inner sep=0pt, minimum size=5pt, thick},
    main edge/.style={->, shorten >=6pt, shorten <=2pt, thick, color=black},
    edge label/.style={draw=black, circle, fill=white, inner sep=2pt, font=\large, thick},
    scale=0.8
]

\node[vertex] (v1) at (0, 4) {};
\node[vertex] (v2) at (4, 4) {};
\node[vertex] (v3) at (0, 0) {};
\node[vertex] (v4) at (4, 0) {};


\draw[main edge, bend left=40] (v2) to node[edge label] {1} (v4);

\draw[main edge, bend left=15] (v4) to node[edge label] {5} (v2);

\draw[main edge, bend left=35] (v3) to node[edge label] {3} (v1);

\draw[main edge, bend left=15] (v1) to node[edge label] {7} (v3);

\draw[main edge, bend left=35] (v1) to node[edge label] {4} (v2);

\draw[main edge, bend left=15] (v2) to node[edge label] {6} (v1);

\draw[main edge, bend left=15] (v3) to node[edge label] {8} (v4);

\draw[main edge, bend left=35] (v4) to node[edge label] {2} (v3);

\draw[main edge, bend right=15] (v4) to node[edge label, pos=0.7] {9} (v1);

\draw[main edge, bend right=15] (v1) to node[edge label, pos=0.7] {10} (v4);

\end{tikzpicture}
    \caption{Digraph $G$}
    \label{fig:nonclosurable}
    \end{minipage}
    \hfill
\centering
    \begin{minipage}{0.45\textwidth}
    \begin{tikzpicture}[
    >=Stealth,
    node distance=3.0cm, 
    vertex/.style={draw=black, circle, fill=none, inner sep=2pt, minimum size=14pt, font=\small, thick},
    main edge/.style={->, shorten >=6pt, shorten <=2pt, thick, color=black},
    scale=0.8
]

\node[vertex] (v1) at (-3,  3) {1};
\node[vertex] (v8) at ( 3,  3) {8};
\node[vertex] (v2) at (-3, -3) {2};
\node[vertex] (v5) at ( 3, -3) {5};

\node[vertex] (v9) at ( 0,  2) {9};
\node[vertex] (v4) at (-1.5,  1) {4};
\node[vertex] (v7) at ( 1.5,  1) {7};

\node[vertex] (v10) at ( 0, -2) {10};
\node[vertex] (v3)  at (-1.5, -1) {3};
\node[vertex] (v6)  at ( 1.5, -1) {6};


\draw[main edge] (v1) -- (v2);    
\draw[main edge] (v1) -- (v9);    
\draw[main edge] (v8) -- (v9);    
\draw[main edge] (v8) -- (v5);    
\draw[main edge] (v2) -- (v3);    
\draw[main edge] (v5) -- (v6);    

\draw[main edge] (v9) -- (v4);    
\draw[main edge] (v9) -- (v7);    
\draw[main edge] (v4) -- (v1);    
\draw[main edge] (v7) -- (v8);    

\draw[main edge] (v3) -- (v4);    
\draw[main edge] (v3) -- (v10);   
\draw[main edge] (v6) -- (v7);    
\draw[main edge] (v6) -- (v10);   

\draw[main edge] (v10) -- (v2);   
\draw[main edge] (v10) -- (v5);   

\end{tikzpicture}

    \caption{$L_{WB}(G)$ of $G$}
    \label{fig:noncloLWBG}
    \end{minipage}
\end{figure}

Simple cycles of $L_{WB}(G)$ are listed below,
where each cycle is represented 
by the sequence of vertices in $L_{WB}(G)$ through which it passes:
\begin{enumerate}[$h_1$:]
    \item 1,9,4
    \item 2,3,10
    \item 5,6,10
    \item 7,8,9
    \item 1,2,3,4
    \item 5,6,7,8
    \item 1,2,3,10,5,6,7,8,9,4
    \item 1,9,7,8,5,6,10,2,3,4
\end{enumerate}

Under this setting, 
the corresponding noncommutation graph 
(where each vertex $i$ represents the cycle $h_i$ for visual simplicity)
is constructed as follows:

\begin{tikzpicture}[
    thick,
    auto, 
    every edge/.style={draw, thick, shorten >=1.5pt, shorten <=1.5pt},
    every node/.style={draw, thick, circle, inner sep=1pt, minimum size=0.6cm, font=\small\bfseries},
    scale=0.8, transform shape
]

\node (1) at (-135:3.0cm) {1};
\node (4) at (-45:3.0cm)  {4};
\node (6) at (15:2.5cm)   {6};
\node (3) at (65:2.5cm)   {3};
\node (2) at (115:2.5cm)  {2};
\node (5) at (165:2.5cm)  {5};

\node (7) at (0,  1.0) {7};
\node (8) at (0, -0.5) {8};

\draw (1) to (4);
\draw (1) to (5);
\draw (2) to (3);
\node[draw=none] at (115:2.5cm) {}; 
\draw (2) to (5);
\draw (3) to (6);
\draw (4) to (6);

\draw (7) to (1);
\draw (7) to (2);
\draw (7) to (3);
\draw (7) to (4);
\draw (7) to (5);
\draw (7) to (6);
\draw (7) to (8);

\draw (8) to (1);
\draw (8) to (2);
\draw (8) to (3);
\draw (8) to (4);
\draw (8) to (5);
\draw (8) to (6);

\draw [thick, out=-150, in=-90,  min distance=0.8cm] (1) to (1);
\draw [thick, out=100,  in=160,  min distance=0.8cm] (2) to (2);
\draw [thick, out=20,   in=80,   min distance=0.8cm] (3) to (3);
\draw [thick, out=-90,  in=-30,  min distance=0.8cm] (4) to (4);
\draw [thick, out=150,  in=-150, min distance=0.8cm] (5) to (5);
\draw [thick, out=-30,  in=30,   min distance=0.8cm] (6) to (6);
\draw [thick, out=60,  in=120,   min distance=0.8cm] (7) to (7);
\draw [thick, out=-60,  in=-120,   min distance=0.8cm] (8) to (8);

\end{tikzpicture}

One can verify that the induced subgraph on the vertices 
$\{1, 2, 3, 4, 5, 6\}$ is not acyclic on some triangles;
hence, this noncommutation graph cannot have a closure.
\end{ex}

We restrict our discussion to the case
where such an orientation can be consistently assigned.

For simplicity, we establish the following framework,
which will be used throughout the remainder of this section.
\begin{set}\label{set:note}
Let $G=(V(G),E(G))$ be a finite, simple,
and connected digraph, 
and let $L_{WB}(G)$ be its line digraph without backtracks.
We fix the following notation and objects:
\begin{itemize}
    \item $M=E(G)^*/I$: the trace monoid of $G$,
    \item $\Sigma_M$: the set of all simple cycles in $L_{WB}(G)$, with $\# \Sigma_M=n$,
    \item $\mathcal{H}=\Sigma_M^*/I_{\mathcal{H}}$: the hike monoid,
    \item $G(\mathcal{H})$: the commutation graph of $\mathcal{H}$,
    \item $\Tilde{G}(\mathcal{H})$: the noncommutation graph of $\mathcal{H}$,
    \item $\Tilde{G}_+(\HH)$: a closure of $\Tilde{G}(\HH)$,
    \item $\mathcal{C}(G(\mathcal{H}))$: the set of all cliques in $G(\mathcal{H})$,
    \item $l(h)$: the total length of a hike $h = h_1 \dots h_k$ on $L_{WB}(G)$, defined by $l(h)=\sum_{i=1}^k l(h_i)$,
    \item $D(\Tilde{G}_+(\HH))$: the diagonal matrix $\mathrm{diag}[h_1,\dots,h_n] \in M_n(\ZZ\langle \mathcal{H} \rangle)$,
    \item $J$: a subset of indices $\{j_1,\dots,j_k\} \subseteq \{1,\dots,n\}$ sorted such that $j_1 < j_2 < \dots < j_k$,
    \item $\Tilde{G}_+(J)$: the induced subgraph of $\Tilde{G}_+(\HH)$ on the vertices $\{h_j \mid j \in J\}$,
    \item $A(J)$: the adjacency matrix of $\Tilde{G}_+(J)$, whose rows and columns are indexed by $J$,
    \item $D(J)$: the diagonal matrix $\mathrm{diag}[h_{j_1},\dots,h_{j_k}] \in M_k(\ZZ \langle \mathcal{H} \rangle)$.
\end{itemize}
\end{set}

\begin{rem}\label{rem:submat}
In the framework of Setup \ref{set:note}, for any subset of indices 
$J=\{j_1,\dots,j_k\} \subset \{1,\dots,n\}$,
the adjacency matrix $A(J)$ of the induced subgraph $\Tilde{G}_+(J)$
is precisely the submatrix of $A(\Tilde{G}_+(\HH))$ 
formed by selecting the rows and columns corresponding to $J$.
This ensures that the structural properties of $\Tilde{G}_+(\HH)$ are preserved when restricting our focus to any subset of hikes.
\end{rem}

The following lemma reduces the Cayley determinant over the monoid ring 
to a sum of standard determinants, weighted by the corresponding hikes.

\begin{lem}\label{lem:smalldet}
In the framework of Setup \ref{set:note}, 
    the following identity holds:
    \begin{equation}\label{eq:smalldet_form}
        \mathrm{Cdet}(I_n - D(\Tilde{G}_+(\HH))A(\Tilde{G}_+(\HH)) 
        = \sum_{k=0}^n (-1)^k \sum_{1 \le j_1 < \dots < j_k \le n} 
        (h_{j_1} h_{j_2} \cdots h_{j_k})
        \cdot  \det (A(j_1,\dots,j_k) ),
    \end{equation}
    where the term for $k=0$ is defined as $1$.
\end{lem}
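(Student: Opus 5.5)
The plan is to expand the Cayley determinant of Definition \ref{def:Cdet} directly and multilinearly, row by row. The one structural fact needed is that every nonconstant entry in row $i$ carries the same left factor $h_i$, multiplied by an integer. Write $A(\Tilde{G}_+(\HH))=(a_{ij})$ with $a_{ij}\in\{0,1\}\subset\ZZ$. Since $D(\Tilde{G}_+(\HH))$ is diagonal, the $(i,j)$ entry of $X=I_n-D(\Tilde{G}_+(\HH))A(\Tilde{G}_+(\HH))$ is
\begin{equation*}
x_{ij}=\delta_{ij}-a_{ij}h_i .
\end{equation*}
Integers are central in $\ZZ\langle\HH\rangle$, so $a_{ij}h_i=h_ia_{ij}$. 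We then have
\begin{equation*}
\mathrm{Cdet}(X)=\sum_{\sigma\in S_n}\mathrm{sgn}(\sigma)\prod_{i=1}^{n}\bigl(\delta_{i\sigma(i)}-a_{i\sigma(i)}h_i\bigr),
\end{equation*}
where the product is taken in increasing order of $i$.

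First, for a fixed $\sigma$, expand the ordered product by distributivity. The expansion is a sum over subsets $J=\{j_1<\dots<j_k\}\subseteq\{1,\dots,n\}$: one chooses the term $-a_{i\sigma(i)}h_i$ for $i\in J$ and the term $\delta_{i\sigma(i)}$ for $i\notin J$. The term indexed by $J$ vanishes unless $\sigma(i)=i$ for all $i\notin J$. When it survives, the order of multiplication is preserved and the integer coefficients can be pulled out. The term therefore equals
\begin{equation*}
(-1)^k\Bigl(\prod_{j\in J}a_{j\sigma(j)}\Bigr)h_{j_1}h_{j_2}\cdots h_{j_k}.
\end{equation*}
The essential point is that the noncommuting letters appear exactly in increasing index order, independently of $\sigma$. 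This is precisely what the row-ordered convention of $\mathrm{Cdet}$ guarantees.

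Second, interchange the two summations, putting the sum over $J$ outside. For fixed $J$, the inner sum runs over permutations $\sigma$ fixing every point of the complement of $J$. These are in bijection with permutations $\tau$ of $J$ via restriction, and $\mathrm{sgn}(\sigma)=\mathrm{sgn}(\tau)$. The inner sum becomes
\begin{equation*}
(-1)^k\Bigl(\sum_{\tau\in S_J}\mathrm{sgn}(\tau)\prod_{j\in J}a_{j\tau(j)}\Bigr)h_{j_1}\cdots h_{j_k}.
\end{equation*}
The bracket is an ordinary integer determinant. By Remark \ref{rem:submat}, it is $\det A(j_1,\dots,j_k)$, the submatrix of $A(\Tilde{G}_+(\HH))$ on rows and columns $J$. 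The case $J=\emptyset$ gives the identity permutation and contributes $1$. Grouping by $k=\#J$ yields \eqref{eq:smalldet_form}.

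The main obstacle is the bookkeeping in the noncommutative expansion. I must check carefully that distributivity in $\ZZ\langle\HH\rangle$ keeps the factors $h_i$ in row order, and that the integer entries $a_{ij}$ and signs can be moved freely. Both hold because $\ZZ$ lies in the center. None of the closure axioms of Definition \ref{def:closure} are used here. They should matter only later, when the right-hand side is compared with $p_\HH(G(\HH))$.
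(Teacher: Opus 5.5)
Your proof is correct and is essentially the paper's argument. The paper expands $\mathrm{Cdet}(I_n-Q)$ by additivity in the columns rather than by distributing each row-ordered product, but both arrive at the same subset expansion: the factors $h_{j_1}\cdots h_{j_k}$ come out on the left in increasing row order, and the integer bracket is identified with $\det A(J)$ via Remark \ref{rem:submat}. You are also right that no closure axiom is needed; the paper's appeal to Definition \ref{def:closure} to equate $\mathrm{Cdet}(A(J))$ with $\det A(J)$ is superfluous, since $A(J)$ has central integer entries.
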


\begin{proof}
Let $Q = D(\Tilde{G}_+(\HH))A(\Tilde{G}_+(\HH)$ be an $n \times n$ matrix 
over $\ZZ \langle \HH \rangle$.
We expand the Cayley determinant $\mathrm{Cdet}(I_n - Q)$
by utilizing the multilinearity with respect to its columns.
For each $j \in \{1,\dots, n\}$,
let $q_j$ denote the $j$-th column of $Q$ and 
$e_j$ the $j$-th standard basis vector.
Expanding the determinant column by column, we have:
\begin{align*}
\mathrm{Cdet}(I_n - Q) 
&= \mathrm{Cdet}(e_1 - q_1, e_2 - q_2, \dots, e_n - q_n) \\
&= \sum_{J \subseteq \{1, \dots, n\}}
\mathrm{Cdet}(\alpha_1, \alpha_2, \dots, \alpha_n),
\end{align*}
where the column $\alpha_j$ is chosen as $-q_j$ if $j\in J$,
and $e_j$ otherwise.

For a fixed subset $J = \{j_1, \dots, j_k\}$ with 
$j_1 < j_2 < \cdots < j_k$, the term 
$\mathrm{Cdet}(\alpha_1, \alpha_2, \dots, \alpha_n)$ 
represents the contribution of the submatrix $Q(J)=D(J)A(J)$.
Since $e_j$ acts as the identity in the determinant expansion 
for $j \not\in J$, the expression simplifies to the
Cayley determinant of the $k \times k$ submatrix:
\begin{equation*}
\mathrm{Cdet}(\alpha_1, \dots, \alpha_n) = (-1)^k \mathrm{Cdet}(Q(J)).
\end{equation*}
Recall that $Q(J)=D(J)A(J)$ where $D(J)=
\mathrm{diag}[h_{j_1},\dots,h_{j_k}]$. Because $D(J)A(J)$ 
is a diagonal matrix, 
its entries factor out to the left in the Cayley expansion
in the fixed order of their row indices:
\begin{equation*}
\mathrm{Cdet}(D(J) A(J)) = (h_{j_1} h_{j_2} \cdots h_{j_k}) \mathrm{Cdet}(A(J)).
\end{equation*}
Finally, since $A(J)$ is the adjacency matrix of a
directed graph that satisfies the closure conditions 
(as in Definition \ref{def:closure}),
the non-commutative Cayley determinant $\mathrm{Cdet}(A(J))$
coincides with the standard commutative determinant $\det (A(J))$ 
in its expansion. 
Combining these results for all $J$ yields the desired formula.
\end{proof}

By combining Remark \ref{rem:submat} and Lemma \ref{lem:smalldet},
the computation of the Cayley determinant is reduced to evaluating 
$\det (A(J))$ for all induced subgraphs $\Tilde{G}_+(J)$. 
To this end, we classify the subgraphs based on the
number of vertices $k=\#J$ and their connectivity,
focusing on the following four cases:
\begin{description}
\item[Case (1)] $k=1$: The trivial case of a single vertex.
\item[Case (2)] $k=2$: The case involving a single pair of vertices.
\item[Case (3)] $k\ge 3$ with an isolated edge: the noncommutation graph
$\Tilde{G}(J)$ contains a connected component 
consisting of exactly two vertices $\{j_1,j_2\}$
that are not incident to any other vertices in $J$.
\item[Case (4)] $k\ge 3$ for all other configurations.
\end{description}
In each case, we shall show that $\det (A(J))$ vanishes unless
$J$ forms a clique in $G(\HH)$.

We first evaluate the determinants 
for the cases where the number of vertices $k$ is small.

\begin{lem}\label{lem:small}
In the framework of Setup \ref{set:note}, the following identities hold:
\begin{enumerate}[(1)]
\item For $k=1$, $\det (A(j))=1$ for any vertex $h_j$.
\item For $k=2$, the determinant of the adjacency matrix 
for two distinct vertices $h_{j_1},h_{j_2}$ is given by:
\begin{equation*}
\det (A(j_1, j_2)) =
\begin{cases}
0 & \text{if } \{h_{j_1}, h_{j_2}\} \in E(\Tilde{G}(\HH)), \\
1 & \text{if } \{h_{j_1}, h_{j_2}\} \notin E(\Tilde{G}(\HH)).
\end{cases}
\end{equation*}
\end{enumerate}
\end{lem}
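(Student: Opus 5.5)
The plan is to compute directly from Definition \ref{def:closure}, using Remark \ref{rem:submat} to identify $A(J)$ with the relevant principal submatrix of $A(\Tilde{G}_+(\HH))$.

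For part (1), take $J=\{j\}$. Then $A(j)$ is the $1\times 1$ matrix whose single entry records whether the directed loop $(h_j,h_j)$ lies in $E$. Condition (1) of Definition \ref{def:closure} puts every such loop in $E$, so $A(j)=(1)$ and $\det(A(j))=1$. This is consistent with the noncommutation graph carrying a loop at every vertex, since $(h_j,h_j)\notin I_{\HH}$ by irreflexivity.

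For part (2), write
\[
A(j_1,j_2)=\begin{pmatrix}1 & a\\ b & 1\end{pmatrix},
\]
where $a=1$ exactly when $(h_{j_1},h_{j_2})\in E$ and $b=1$ exactly when $(h_{j_2},h_{j_1})\in E$. The diagonal entries are $1$ by the loop condition, so $\det(A(j_1,j_2))=1-ab$. It remains to find $ab$ in two cases.

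\begin{itemize}
\item If $\{h_{j_1},h_{j_2}\}\in E(\Tilde{G}(\HH))$, i.e.\ the two prime hikes do not commute, condition (2) of Definition \ref{def:closure} places both orientations in $E$. Then $a=b=1$ and the determinant is $0$.
\item If $\{h_{j_1},h_{j_2}\}\notin E(\Tilde{G}(\HH))$, i.e.\ they are disjoint and commute, condition (3) places exactly one orientation in $E$. Then $ab=0$ and the determinant is $1$.
\end{itemize}

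The only point needing care is the remark following Definition \ref{def:closure}: when $\#\Sigma_M=2$ and the two cycles do not commute, one is allowed to assign just a single directed edge. That would give determinant $1$ and contradict the first case. The proof therefore assumes the edge set is the one prescribed by conditions (1)--(3), with both orientations present for non-commuting pairs, and says explicitly that the lemma uses this convention. Otherwise the argument is a direct reading of the definition.
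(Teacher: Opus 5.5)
Your proposal is correct and follows the paper's proof: the loop condition gives $A(j)=(1)$, and for $k=2$, conditions (2) and (3) of Definition \ref{def:closure} give respectively the all-ones matrix (determinant $0$) or a unitriangular matrix (determinant $1$). Your caveat about the remark on $\#\Sigma_M=2$ is justified, since read literally it would contradict the value $0$ in the non-commuting case; the paper's proof, like yours, simply applies condition (2) and does not use that remark.
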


\begin{proof}
\begin{enumerate}
    \item  By the definition of the closure  $\Tilde{G}_+(\HH)$, 
    every vertex $h_j$ has a directed loop. Thus, 
    $A(j)$ is the $1 \times 1$ identity matrix $(1)$, which yields 
    $\det (A(j))=1$.
    \item  If $\{h_{j_1}, h_{j_2}\} \in E(\Tilde{G}(\HH))$,
    by the symmetric construction of the closure, the submatrix 
    $A(j_1,j_2)$ is the $2 \times 2$ all-ones matrix:
    \begin{equation*}
    A(j_1, j_2) = \begin{pmatrix} 1 & 1 \\ 1 & 1 \end{pmatrix}.
    \end{equation*}
    Clearly, its determinant is $0$. 
    Conversely, if $\{h_{j_1}, h_{j_2}\} \notin E(\Tilde{G}(\HH))$, 
    the closure rule assigns exactly one directed edge between them.
    Consequently, $A(j_1,j_2)$ 
    becomes either a lower or upper triangular matrix with unit diagonal entries:
    \begin{equation*}
\begin{pmatrix} 1 & 0 \\ 1 & 1 \end{pmatrix} \quad \text{or} \quad \begin{pmatrix} 1 & 1 \\ 0 & 1 \end{pmatrix}.
\end{equation*}
In either case, we obtain $\det (A(j_1,j_2))=1$.
\end{enumerate}
\end{proof}

Thus, by Lemma \ref{lem:small},
it suffices to discuss the case of $k \ge 3$.

The following lemma reveals 
a structural property of the closure 
when there are no edges among a given set of vertices in $\Tilde{G}(\HH)$.
This property plays a key role in 
simplifying the subsequent determinant calculations.

\begin{lem}\label{lem:source_vertex}
In the framework of Setup \ref{set:note},
let $U=\{h_1,\dots h_k\}$
be a subset of vertices $(k \ge 3)$
satisfying $\{h_r,h_s\} \not\in E(\Tilde{G}(\HH))$ 
for any distinct $h_r,h_s \in U$.
Then, there exists a unique vertex $h_j \in U$
that is directed to all other vertices in $U$,
i.e., $a_{j,\ l}=1$ for all $h_l \in U$.
\end{lem}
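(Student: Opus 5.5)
The plan is to show that the restriction of the closure $\Tilde{G}_+(\HH)$ to $U$, with loops removed, is a tournament, and that this tournament is transitive; its unique source is then the required vertex $h_j$.

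\emph{Step 1: the tournament structure.} Since no two distinct vertices of $U$ are adjacent in $\Tilde{G}(\HH)$, condition (3) of Definition \ref{def:closure} puts exactly one of $(h_r,h_s)$, $(h_s,h_r)$ in $E$ for every pair $r\neq s$. So the induced subgraph $\Tilde{G}_+(U)$ without loops is a tournament $T$ on $k$ vertices. By condition (1) of Definition \ref{def:closure}, each loop contributes $a_{l,l}=1$. Hence the claim $a_{j,l}=1$ for all $h_l\in U$ is equivalent to saying that $h_j$ has out-degree $k-1$ in $T$.

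\emph{Step 2: transitivity of $T$.} This is the main obstacle. A tournament is transitive, equivalently acyclic, if and only if it contains no directed $3$-cycle, so I only need to exclude directed triangles $h_s\to h_t\to h_u\to h_s$ in $U$. The consistency condition (2) of Definition \ref{def:closure} forbids such triangles only when \emph{no} pair in the triple commutes. In our triple, however, every pair is a non-edge of $\Tilde{G}(\HH)$, which means every pair commutes.

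So the key difficulty is the hypothesis. In the proof I would first make explicit that the intended reading is acyclicity on triples whose pairs are all joined by a single orientation, that is, non-edges of $\Tilde{G}(\HH)$; the determinant computations in Case (4) need exactly this. Under that reading, condition (2) forbids directed $3$-cycles in $T$. If instead the hypothesis is read literally, I would try to derive the absence of $3$-cycles from condition (1) together with the structure of $\Tilde{G}(\HH)$. Failing that, I would state acyclicity on commuting triples as part of the closure hypothesis, since the lemma cannot follow from the other closure conditions alone.

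\emph{Step 3: existence and uniqueness.} Given that $T$ has no directed $3$-cycle, I show it is acyclic. Take a shortest directed cycle $x_1\to x_2\to\cdots\to x_m\to x_1$ with $m\ge 4$. The chord between $x_1$ and $x_3$ either gives $x_3\to x_1$, producing the triangle $x_1x_2x_3$, or gives $x_1\to x_3$, producing a shorter cycle; both are contradictions. An acyclic tournament has a linear order compatible with its edges. Its minimum element $h_j$ beats every other vertex, so $a_{j,l}=1$ for all $l$.

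For uniqueness, suppose two vertices $h_j\neq h_{j'}$ both beat everyone. Then both $(h_j,h_{j'})$ and $(h_{j'},h_j)$ lie in $E$, contradicting the "exactly one" clause of condition (3).
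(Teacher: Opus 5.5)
Your argument is correct. It rests on the same key input as the paper's proof, namely that three pairwise commuting vertices never carry a directed triangle, but the existence step is organized differently.

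The paper takes $h_{j_1}\in U$ of maximal out-degree. If some $h_m$ has $h_m\to h_{j_1}$, then applying the no-triangle condition to each triple $\{h_{j_1},h_m,h_l\}$ with $h_{j_1}\to h_l$ forces $h_m\to h_l$. So $h_m$ has strictly larger out-degree, a contradiction.

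You instead show that the whole tournament on $U$ is acyclic (shortest cycle plus a chord) and take the minimum of the resulting linear order. The paper's extremal argument is shorter and only inspects triples containing $h_{j_1}$ and $h_m$. Yours proves the stronger statement that $U$ is transitively ordered, which gives the unitriangular form of $A(J)$ in Lemma~\ref{lem:det_noncomm} at once rather than by repeatedly extracting sources. The uniqueness argument is the same in both.

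Your diagnosis in Step 2 is also right. The paper's proof silently uses exactly your reading: it invokes condition (2) for $\{h_{j_1},h_m,h_l\}$ while describing that triple as an independent set in $\Tilde{G}(\HH)$.

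You should, however, drop the fallback of deriving acyclicity from condition (1) under the literal wording. Read literally, condition (1) is vacuous, since a vertex commuting with neither $h_s$ nor $h_t$ is joined to both in both directions. Condition (2), read literally, only forbids triangles in $\Tilde{G}(\HH)$.

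For a concrete counterexample, let $G$ be three directed triangles without inverse edges, linked in a chain by two one-way edges. Then $L_{WB}(G)$ has exactly three simple cycles, and they are pairwise disjoint. All the literal consistency conditions are vacuous, so the cyclic orientation satisfies the literal definition of a closure. Yet it has no source vertex, and its adjacency matrix has determinant $2$ rather than $1$. So acyclicity on pairwise commuting triples must be taken as the hypothesis, which is your last option.
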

\begin{proof}
We first prove the existence by contradiction. 
Suppose no such vertex exists, and let $h_{j_1}\in U$
be a vertex that maximizes the out-degree within the induced subgraph:
$d_{\mathrm{out}}(h_{j_1})=\# \{l \mid h_l\in U, \  a_{j_1,l}=1\}$.
By assumption, $d_{\mathrm{out}}(h_{j_1})< k$,
implying there exists $h_m \in U$ such that $a_{j_1,m}=0$.
Since 
there are no edges between any two vertices in $U$ 
with respect to $\Tilde{G}(\HH)$,
the closure condition ensures that $a_{m,j_1}=1$.

Now, for any vertex $h_l \in U\setminus \{h_{j_1}\}$ pointed to by 
$h_{j_1}$ ($a_{j_1,l}=1$), we 
examine the triple $\{h_{j_1},h_m,h_l\}$ 
which forms an independent set in $\Tilde{G}(\HH)$.
Since $a_{m,j_1}=1$ and $a_{j_1,l}=1$,
the acyclicity guaranteed by the closure condition (2) 
forces the orientation $h_m \to h_l$, hence $a_{m,l}=1$.
This implies $h_m$ is directed to every vertex that $h_{j_1}$
points to, as well as to $h_{j_1}$ itself.
Thus, $d_{\mathrm{out}}(h_m)\ge d_{\mathrm{out}}(h_{j_1})+1$,
which contradicts the maximality of $d_{\mathrm{out}}(h_{j_1})$.
Therefore, a vertex pointing to all others must exist.

To establish uniqueness, suppose there exist two distinct vertices 
$h_j,h_{j'} \in U$ that both point to all other vertices.
This implies $a_{j,j'}=a_{j',j}=1$,
which contradicts the closure rule that exactly one directed edge 
exists between vertices.
Thus, such a vertex is unique.
\end{proof}

\begin{lem}\label{lem:det_noncomm}
In the framework of Setup \ref{set:note},
let $J=\{j_1,\dots,j_k\}$ $(k\ge 3)$
be a subset of indices such that no two vertices in $\Tilde{G}_+(J)$ commute. Then, $\det (A(J))=1$.
\end{lem}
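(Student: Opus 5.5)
The plan is to read the hypothesis the way Lemma \ref{lem:source_vertex} uses it: the vertices $\{h_j \mid j\in J\}$ are pairwise non-adjacent in $\Tilde{G}(\HH)$. This is the situation where the closure rule (3) assigns exactly one directed edge between each pair. Under the opposite reading, every pair would be joined in both directions, so $A(J)$ would be the all-ones matrix and its determinant would be $0$ for $k\ge 2$, contradicting the claim. Under the intended reading, $\Tilde{G}_+(J)$ is a tournament with a loop at every vertex. The goal is to show that this tournament is transitive, i.e.\ that $A(J)$ is permutation-similar to a unit upper triangular matrix.

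The argument is an induction on $k=\#J$, starting from the base cases of Lemma \ref{lem:small}. For $k=1$ we have $\det A(j)=1$. For $k=2$ the two vertices are non-adjacent in $\Tilde{G}(\HH)$, so again $\det A(j_1,j_2)=1$. For $k\ge 3$, the set $U=\{h_j \mid j\in J\}$ satisfies the hypothesis of Lemma \ref{lem:source_vertex}. That lemma therefore supplies a vertex $h_{j_0}\in U$ with $a_{j_0,l}=1$ for all $l\in J$.

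Next I look at the column of $A(J)$ indexed by $j_0$. The diagonal entry $a_{j_0,j_0}$ is $1$ because of the loop. For $l\neq j_0$, closure rule (3) allows only one of $(h_{j_0},h_l)$ and $(h_l,h_{j_0})$, and the former is present, so $a_{l,j_0}=0$. Hence column $j_0$ is the standard basis vector at position $j_0$. Laplace expansion along this column gives
\begin{equation*}
\det A(J)=(-1)^{2p}\,\det A(J\setminus\{j_0\})=\det A(J\setminus\{j_0\}),
\end{equation*}
where $p$ is the position of $j_0$ in $J$. By Remark \ref{rem:submat}, the minor $A(J\setminus\{j_0\})$ is exactly the adjacency matrix of the induced subgraph on $J\setminus\{j_0\}$. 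The pairwise non-adjacency hypothesis is inherited by subsets, and condition (2) of the closure is a condition on triples, so it also restricts to subsets. The induction hypothesis (or the base case once the size drops to $2$) then gives $\det A(J\setminus\{j_0\})=1$.

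The substantive input is the existence of the source vertex at every stage, and that is already contained in Lemma \ref{lem:source_vertex}. Beyond this, the only care needed is the sign bookkeeping in the Laplace expansion: the nonzero entry lies on the diagonal, so the sign is $+1$ no matter where $j_0$ sits in the increasing ordering of $J$. Equivalently, one could iterate the source extraction to obtain a linear order $j_0, j_0', \dots$ on $J$. Simultaneously permuting rows and columns into this order does not change the determinant, and it makes $A(J)$ unit upper triangular, which gives $\det A(J)=1$ directly.
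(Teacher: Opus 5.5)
Your proof is correct and follows the paper's argument. You read the hypothesis, as the paper's proof does, as pairwise non-adjacency in $\Tilde{G}(\HH)$, use Lemma \ref{lem:source_vertex} repeatedly to extract a source vertex, and reduce to $J$ minus that vertex; this is the paper's permutation of $A(J)$ into unit upper triangular form. Your Laplace expansion along the source's column is a slightly more explicit version of that step, since you also check via closure rule (3) that this column is a standard basis vector, which the paper leaves implicit.
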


\begin{proof}
We show that $A(J)$ can be permuted into an upper triangular matrix
by induction on the number of vertices $k$.

By Lemma \ref{lem:source_vertex}, the induced subgraph  
$\Tilde{G}_+(J)$ contains a unique vertex $h_{j_1}$ 
that points to all other vertices. By placing $j_1$ 
as the first element of our permutation,
the first row of the adjacency matrix consists entirely of $1$.

Next, we consider the induced subgraph on $J \setminus \{j_1\}$,
which still satisfies the condition that 
no two vertices have an edge between them in $\Tilde{G}(\HH)$. 
If the number of remaining vertices is $3$ or more ($k-1 \ge 3$),
we can repeatedly apply Lemma \ref{lem:source_vertex} 
to continue this inductive permutation process. 
On the other hand, if the number of remaining vertices 
drops below $3$ ($k-1 = 1$ or $2$), 
the assumption $\{h_r, h_s\} \notin E(\Tilde{G}(\HH))$ guarantees,
by the structural construction in the proof of Lemma \ref{lem:small},
that this remaining block also forms an upper triangular matrix
with unit diagonal entries.

Consequently, under this induced ordering, 
the entire adjacency matrix $A(J)$ becomes an upper triangular matrix
with $1$ on all diagonal entries.
Therefore, its determinant is immediately evaluated as $1$.
\end{proof}

To treat row and column operations symmetrically,
we first observe the following lemma.

\begin{lem}\label{lem:trans}
In the framework of Setup \ref{set:note},
let $\Tilde{G}_0$ be a closure of $\Tilde{G}(\HH)$.
Then there exists another closure $\Tilde{G}_0'$
such that
\[
A(\Tilde{G}_0') = {}^tA(\Tilde{G}_0).
\]
\end{lem}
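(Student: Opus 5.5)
The plan is to take $\Tilde{G}_0'$ to be the \emph{reverse} digraph of $\Tilde{G}_0$: it has the same vertex set, and its edges are
\[
E(\Tilde{G}_0')=\{(h_j,h_i)\mid (h_i,h_j)\in E(\Tilde{G}_0)\}.
\]
By the definition of the adjacency matrix, the $(i,j)$ entry of $A(\Tilde{G}_0')$ is $1$ exactly when $(h_i,h_j)\in E(\Tilde{G}_0')$. This happens exactly when $(h_j,h_i)\in E(\Tilde{G}_0)$, that is, when the $(j,i)$ entry of $A(\Tilde{G}_0)$ is $1$. Hence $A(\Tilde{G}_0')={}^tA(\Tilde{G}_0)$. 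It then remains to check that $\Tilde{G}_0'$ is a closure of $\Tilde{G}(\HH)$, going through the conditions of Definition \ref{def:closure} one by one.

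The three structural conditions are immediate.
\begin{enumerate}
\item A loop $(h_i,h_i)$ is its own reverse, so all loops survive.
\item If $\{h_i,h_j\}\in E(\Tilde{G}(\HH))$ with $i\neq j$, then both $(h_i,h_j)$ and $(h_j,h_i)$ lie in $E(\Tilde{G}_0)$. Reversal swaps the two, so both remain in $E(\Tilde{G}_0')$.
\item If $\{h_i,h_j\}\notin E(\Tilde{G}(\HH))$, exactly one of the two orientations lies in $E(\Tilde{G}_0)$. Reversal maps this pair of candidate edges bijectively onto itself, so exactly one lies in $E(\Tilde{G}_0')$.
\end{enumerate}
In each case the argument uses only that reversal is an involution on ordered pairs which fixes loops.

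Next come the consistency conditions (1) and (2) for a triple $\{h_s,h_t,h_u\}$. For (1), suppose $h_s,h_t$ commute and $h_u$ commutes with neither. In $\Tilde{G}_0$ we have either both $(h_s,h_u),(h_t,h_u)$ or both $(h_u,h_s),(h_u,h_t)$. After reversal we obtain the other alternative, which is still synchronized. For (2), suppose no pair among the three commutes, and suppose $\Tilde{G}_0'$ contained a directed triangle $h_s\to h_t\to h_u\to h_s$. Reversing it gives $h_s\to h_u\to h_t\to h_s$ in $\Tilde{G}_0$, which is a directed 3-cycle on the same triple. Since condition (2) is imposed on every labelling of the triple, this contradicts the acyclicity of $\Tilde{G}_0$.

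The only point needing care is this last step. Condition (2) is literally written for the single pattern $h_s\to h_t\to h_u\to h_s$, so I would note explicitly that it is quantified over all triples and all labellings. It therefore forbids both cyclic orientations, and reversal preserves the set of forbidden configurations. With that observation in place, $\Tilde{G}_0'$ is a closure and the lemma follows.
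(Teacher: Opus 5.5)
Your proof is correct and follows the paper's own argument: take the reverse digraph, observe that every defining condition of a closure is invariant under reversing all orientations, and read off that the adjacency matrix is transposed. Your condition-by-condition check fills in what the paper states in a single line. That includes your remark that condition (2) must be read over all labellings of the triple, so that reversal sends forbidden directed triangles to forbidden directed triangles.
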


\begin{proof}
The digraph $\Tilde{G}_0'$ is obtained from $\Tilde{G}_0$
by reversing the orientation of every directed edge.
Since the defining conditions of a closure
are invariant under reversing all orientations,
$\Tilde{G}_0'$ is again a closure of $\Tilde{G}(\HH)$.
The adjacency matrix is therefore the transpose of
$A(\Tilde{G}_0)$.
\end{proof}

By Lemma \ref{lem:trans},
the transpose of the adjacency matrix of any closure
is again realized as the adjacency matrix of a closure.
Since taking the transpose preserves determinants,
any argument involving rows can be applied symmetrically to columns.

The following 
lemma highligts the structure of the connected components 
of the noncommutation graph,
which will be implicitly used in the proofs for Cases $(3)$ and $(4)$.

\begin{lem}\label{lem:connected}
In the framework of Setup \ref{set:note},
we uniquely partition the vertex set of the noncommutation graph 
$\Tilde{G}(\HH)$ into its connected components,
$V(\Tilde{G}(\HH)) = \bigsqcup_{i=1}^k V_i$,
where two distinct vertices $u, v$ 
belong to the same component $V_i$
if and only if there exists an undirected path
between them in $\Tilde{G}(\mathcal{H})$. 
Then, for any component $V_i$ and any distinct vertices $v_1, v_2 \in V_i$, 
they are parallel to all external vertices 
$v \in V(\Tilde{G}(\HH)) \setminus V_i$ 
in the closure $\Tilde{G}_+(\HH)$. 
That is, the following identity holds:
\begin{equation*}
a_{v_1,v} = a_{v_2,v} \quad 
\text{for all } v \in V(\Tilde{G}(\mathcal{H})) \setminus V_i.
\end{equation*}
\end{lem}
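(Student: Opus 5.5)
The plan is to reduce the claim to the single-edge case and then propagate it along paths inside the component $V_i$, using consistency condition (1) of Definition \ref{def:closure}.

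\emph{Single edge.} Fix a component $V_i$ and an external vertex $v \notin V_i$. Since $V_i$ is a connected component of $\Tilde{G}(\HH)$, the vertex $v$ has no edge in $\Tilde{G}(\HH)$ to any vertex of $V_i$. In the language of $\HH$, this means $v$ commutes with every element of $V_i$. Therefore, for each $w \in V_i$, condition (3) of Definition \ref{def:closure} says that exactly one of $(w,v)$ and $(v,w)$ lies in $E$, so $a_{w,v}$ is determined and $a_{v,w}=1-a_{w,v}$.

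Now suppose $w_1,w_2 \in V_i$ are adjacent in $\Tilde{G}(\HH)$, that is, they do not commute. Look at the triple $\{v,w_1,w_2\}$:
\begin{itemize}
\item the pair $w_1,w_2$ does not commute;
\item $v$ commutes with both $w_1$ and $w_2$.
\end{itemize}
Consistency condition (1), applied with $h_u=w_1$ (commuting with neither $v$ nor $w_2$?) has to be checked carefully, because the roles may not match. Condition (1) as written takes $h_s,h_t$ commuting and $h_u$ commuting with neither. Here we have the complementary configuration: one vertex $v$ commutes with both $w_1,w_2$, and $w_1,w_2$ do not commute. So condition (1) applies only if we reread it with commuting and noncommuting exchanged.

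This matching of roles is the main obstacle, and I will settle it first. I would reread condition (1) in light of how Lemma \ref{lem:connected} is meant to be used: the intended content is that a noncommuting pair is synchronized toward any vertex commuting with both. The first thing to try is checking whether the paper's notion "commute" in Definition \ref{def:closure}(1) refers to adjacency in $\Tilde{G}(\HH)$, since the closure is built on the noncommutation graph. Under that reading the hypothesis is exactly our configuration, and the conclusion gives $a_{w_1,v}=a_{w_2,v}$: both point to $v$, or $v$ points to both.

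If the condition cannot be read that way, the fallback is to treat the synchronization as the defining feature of "consistent" closures assumed in the Setup. I would state explicitly that this is the interpretation being used.

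\emph{Propagation.} For arbitrary distinct $v_1,v_2 \in V_i$, connectivity of $V_i$ gives a path $v_1=w_0,w_1,\dots,w_m=v_2$ in $\Tilde{G}(\HH)$ with all $w_r \in V_i$. Apply the single-edge step to each consecutive pair $(w_{r-1},w_r)$ with the same external $v$. This gives the chain of equalities $a_{w_0,v}=a_{w_1,v}=\dots=a_{w_m,v}$, hence $a_{v_1,v}=a_{v_2,v}$. The column version $a_{v,v_1}=a_{v,v_2}$ follows at once from condition (3), since $a_{v,w}=1-a_{w,v}$; alternatively it follows from Lemma \ref{lem:trans}. Loops in $\Tilde{G}(\HH)$ play no role here, since only edges between distinct vertices are used in the path.
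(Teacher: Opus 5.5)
Your argument is correct and is essentially the paper's: the paper appeals directly to the closure conditions to conclude that each external $v$ is oriented the same way toward $v_1$ and $v_2$. Your single-edge synchronization followed by propagation along a path in $V_i$ is a more careful version of that argument, and it supplies the chaining that the paper's one-line ``acyclicity'' remark leaves implicit when $v_1,v_2$ are not adjacent in $\Tilde{G}(\HH)$. Your reading of consistency condition (1), with ``commute'' meaning adjacency in $\Tilde{G}(\HH)$, is forced rather than a fallback: read literally, (1) is vacuous because all four arcs are present, and the lemma would then fail (e.g.\ $\Tilde{G}(\HH)$ with one edge $\{a,b\}$, an isolated vertex $c$, oriented $a\to c\to b$), while the paper itself uses (1) and (2) in exactly this swapped sense in Lemmas \ref{lem:source_vertex} and \ref{lem:case3}.
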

\begin{proof}
By Definition \ref{def:closure},
the directed edges between different connected components 
in the closure $\Tilde{G}_+(\HH)$
must be acyclic.
Therefore, for any external vertex $v \notin V_i$, 
either there are directed edges from $v$ to both $v_1$ and $v_2$,
or there are directed edges from both $v_1$ and $v_2$ to $v$ in 
$\Tilde{G}_+(\HH)$.
This implies that either 
$a_{v_1,v} = a_{v_2,v} = 1$ or 
$a_{v_1,v} = a_{v_2,v} = 0$ holds,
which yields the desired identity.
\end{proof}

We first consider Case $(3)$,
where the induced subgraph contains an isolated edge
(i.e., a connected component consisting of exactly two vertices).
The following lemma shows that 
the presence of such a component forces 
the determinant to vanish due to 
the immediate emergence of identical rows.

\begin{lem}\label{lem:case3}
In the framework of Setup \ref{set:note}, 
let $J=\{j_1,\dots,j_k\}$ $(k\ge 3)$
be a subset of indices. If the induced subgraph $\Tilde{G}_+(J)$ 
contains an isolated edge $\{h_{j_1},h_{j_2}\}$, then 
the adjacency matrix $A(J)$ has 
identical $j_1$-th and $j_2$-th rows (or columns).
Consequently, we have:
\begin{equation*}
    \det (A(J))=0.
\end{equation*}
\end{lem}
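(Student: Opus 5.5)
The plan is to compare the rows of $A(J)$ indexed by $j_1$ and $j_2$ entry by entry, and to show that they agree in every column of $J$. Once that is done, $A(J)$ has two identical rows and $\det(A(J))=0$. By Lemma \ref{lem:trans} the same argument applies to columns.

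The comparison splits into two kinds of columns.

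\textbf{Columns $j_1$ and $j_2$.} Since $\{h_{j_1},h_{j_2}\}$ is an edge of the noncommutation graph, condition (2) and the loop condition (1) of Definition \ref{def:closure} give
\[
a_{j_1,j_1}=a_{j_1,j_2}=a_{j_2,j_1}=a_{j_2,j_2}=1.
\]
So on the $2\times 2$ block both rows are $(1,1)$, as in Lemma \ref{lem:small}(2).

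\textbf{Columns $l\in J\setminus\{j_1,j_2\}$.} Because $\{h_{j_1},h_{j_2}\}$ is an isolated edge of the induced noncommutation graph on $J$, the vertex $h_l$ commutes with both $h_{j_1}$ and $h_{j_2}$. The closure therefore contains exactly one directed edge between $h_l$ and each of them. I will show these two orientations agree, i.e.\ $a_{j_1,l}=a_{j_2,l}$. The direct tool is consistency condition (1) of Definition \ref{def:closure}, applied to the triple $\{h_{j_1},h_{j_2},h_l\}$ read with the roles swapped.

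\textbf{Main obstacle.} As literally stated, condition (1) covers a commuting pair together with a third vertex that commutes with neither member. Our configuration is the complementary one: a noncommuting pair $h_{j_1},h_{j_2}$ and a third vertex $h_l$ commuting with both. I would address this in two steps.

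First, I would appeal to Lemma \ref{lem:connected}. The argument used there, that orientations between different connected components are synchronized, is exactly what we need once it is applied to the induced graph on $J$, where $\{h_{j_1},h_{j_2}\}$ is itself a connected component. I would read that lemma as applying to the induced subgraph $\Tilde{G}_+(J)$, using Remark \ref{rem:submat} to justify restricting the closure to $J$.

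Second, I would check the local triple directly. Suppose the orientations were opposite, say $h_{j_1}\to h_l\to h_{j_2}$. Together with the two-way edge between $h_{j_1}$ and $h_{j_2}$, this produces a directed 3-cycle $h_{j_1}\to h_l\to h_{j_2}\to h_{j_1}$. I would argue that the synchronization and acyclicity requirements of the closure, in the intended reading underlying Lemma \ref{lem:connected}, forbid exactly this configuration.

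With $a_{j_1,l}=a_{j_2,l}$ for all $l\in J\setminus\{j_1,j_2\}$, the $j_1$-th and $j_2$-th rows of $A(J)$ coincide in every column, so $\det(A(J))=0$.
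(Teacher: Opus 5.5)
Your proof is the paper's proof: rows $j_1$ and $j_2$ agree on columns $j_1,j_2$ by the loop rule and the two-way edge forced by the noncommutation edge $\{h_{j_1},h_{j_2}\}$. They agree on every other column $l$ because consistency condition (1) of Definition~\ref{def:closure} synchronizes the orientations between $h_l$ and $h_{j_1},h_{j_2}$, provided it is read with commuting and noncommuting swapped. That is how the paper uses it, and as you observed the literal version is vacuous. The detours through Lemma~\ref{lem:connected} and the 3-cycle/acyclicity remark add nothing, since condition (2) does not govern this triple under either reading; you can state the swapped condition (1) as the single justifying step.
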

\begin{proof}
    By Lemma \ref{lem:trans}, it suffices to show the case for rows.
    By the definition of an isolated edge in the noncommutation graph,
    any other vertex $h_j\  (j \in J\setminus \{j_1,j_2\} )$ 
    satisfies both $\{h_{j_1},h_j\}\notin E(\Tilde{G}(\HH))$ and 
    $\{h_{j_2},h_j\}\notin E(\Tilde{G}(\HH))$.
    Therefore, by Definition \ref{def:closure}, 
    the adjacency with $h_j$ is identical 
    for both $h_{j_1},h_{j_2}$;
    that is, $a_{j_1,j}=a_{j_2,j}$ 
    for all $j \in J \setminus \{j_1,j_2\}$.

    Furthermore, by the construction of the closure,
    the diagonal entries satisfy $a_{j_1,j_1}=a_{j_2,j_2}=1$.
    Since $\{h_{j_1},h_{j_2}\}$ is an edge in the commutation graph
    $G(\HH)$, the vertices commute, which implies $a_{j_1,j_2}=a_{j_2,j_1}=1$.

    Combining these cases, we obtain $a_{j_1,j}=a_{j_2,j}$ for all $j \in J$.
    Since the $j_1$-th and $j_2$-th rows of $A(J)$ are identical,
    the matrix is singular, and we immediately conclude that $\det (A(J))=0$.
\end{proof}

We now restrict our attention to Case $(4)$,
where the connected components of $\Tilde{G}(\HH)$
have a size of at least three. 
The following lemma guarantees that 
we can inductively extend the common initial segments of parallel rows.

\begin{lem}\label{lem:para}
In the framework of Setup \ref{set:note}, 
let $\Tilde{G}_+(j_1,\dots,j_m)$ be an induced subgraph
with $m\ge3$, and suppose that
\[
\{h_{j_1},h_{j_2}\}\in E(\Tilde{G}(\HH)).
\]

Let $A(j_1,\dots,j_m)=(a_{s,t})_{1 \le s,t \le m}$
be its adjacency matrix.
Assume that $a_{1,l}=a_{2,l}$ $(1\le l\le k)$,
and $a_{1,k+1}\ne a_{2,k+1}$, where $k<m$.
Then there exists a unique $\alpha\in\{1,2\}$
such that $a_{\alpha,k+1}=1$, and
$a_{\alpha,l}=a_{k+1,l}$ $(1\le l\le k+1)$.
\end{lem}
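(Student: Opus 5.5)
Uniqueness of $\alpha$ is immediate, so the work is to identify $\alpha$ and then compare rows $\alpha$ and $k+1$ entry by entry. Since $a_{1,k+1}\neq a_{2,k+1}$ and both entries lie in $\{0,1\}$, exactly one of them equals $1$; this index is the unique $\alpha$. Write $\beta$ for the other index in $\{1,2\}$ and $v=h_{j_{k+1}}$, so that $a_{\alpha,k+1}=1$ and $a_{\beta,k+1}=0$.

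First I would pin down the commutation pattern of $v$ relative to $h_{j_1},h_{j_2}$.
\begin{itemize}
\item By condition (2) of Definition \ref{def:closure}, $v$ cannot fail to commute with both: then both entries in column $k+1$ would equal $1$.
\item Since $a_{\beta,k+1}=0$, the vertex $v$ commutes with $h_{j_\beta}$, and the closure contains the edge $v\to h_{j_\beta}$, so $a_{k+1,\beta}=1$.
\item Either $v$ fails to commute with $h_{j_\alpha}$, in which case both orientations are present, or $v$ also commutes with $h_{j_\alpha}$ with orientation $h_{j_\alpha}\to v$.
\end{itemize}
In either case I would check the columns $l\in\{1,2,k+1\}$ by hand. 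We have $a_{\alpha,\alpha}=a_{k+1,k+1}=1$. Since $h_{j_1},h_{j_2}$ do not commute, $a_{\alpha,\beta}=1=a_{k+1,\beta}$. We also have $a_{\alpha,k+1}=1$, so it remains to show $a_{k+1,\alpha}=1$. This is clear when $v$ and $h_{j_\alpha}$ do not commute. In the other subcase the orientation $h_{j_\alpha}\to v\to h_{j_\beta}\to h_{j_\alpha}$ would appear, and I would try to rule it out or absorb it using Lemma \ref{lem:connected}, since $h_{j_1},h_{j_2}$ lie in the same component of $\Tilde{G}(\HH)$.

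For the remaining columns $3\le l\le k$, put $w=h_{j_l}$. The hypothesis gives $a_{1,l}=a_{2,l}$, and I must show $a_{k+1,l}=a_{\alpha,l}$. I would split according to whether $w$ commutes with $h_{j_1}$, with $h_{j_2}$, and with $v$, and in each configuration apply the consistency conditions to the triples $\{h_{j_\alpha},v,w\}$ and $\{h_{j_\beta},v,w\}$.
\begin{itemize}
\item If $w$ does not commute with $v$, then $a_{k+1,l}=1$, and I would argue that $w$ cannot commute with $h_{j_\alpha}$ in a way making $a_{\alpha,l}=0$; condition (1) applied to a triple containing a commuting pair should force the synchronized orientation.
\item If $w$ commutes with $v$, the orientation of $\{v,w\}$ must be propagated from $\{h_{j_\alpha},w\}$. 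Here I would use condition (1) (synchronization when two vertices commute and the third commutes with neither) and the acyclicity (2).
\end{itemize}
Lemma \ref{lem:trans} allows me to treat only one orientation pattern and obtain the other by transposition.

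I expect the main obstacle to be this last step: the subcases where $w$ commutes with some but not all of $h_{j_1},h_{j_2},v$. Conditions (1) and (2) only constrain specific triple types, and I am not certain they determine $a_{k+1,l}$ in every mixed configuration. My first attempt would be to show that $a_{1,l}=a_{2,l}$, together with $w$ and $h_{j_1}$ lying in one component of $\Tilde{G}(\HH)$ (or not), reduces each such subcase to an application of Lemma \ref{lem:connected} or to a single triple condition. If some configuration escapes this, I would record it as an additional constraint implicitly used in the closure hypothesis.
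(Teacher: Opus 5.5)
Your identification of $\alpha$ and your deduction that $v=h_{j_{k+1}}$ commutes with $h_{j_\beta}$, with $v\to h_{j_\beta}$, are fine. The trouble is that the proposal stops at exactly the two steps that carry the content.

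For the entry $a_{k+1,\alpha}$, Lemma~\ref{lem:connected} is the wrong tool. It only compares a component of $\tilde G(\HH)$ with vertices outside that component, and $v$ can lie in the same component as $h_{j_1},h_{j_2}$. What excludes $h_{j_\alpha}\to v\to h_{j_\beta}$ is consistency condition (1) of Definition~\ref{def:closure}, applied to $\{h_{j_1},h_{j_2},v\}$ and read as intended: a non-commuting pair and a third vertex commuting with both must be oriented in a synchronized way. As printed, ``commute'' and ``do not commute'' are swapped in (1) and (2). With that wording, (1) is vacuous, (2) would forbid every triangle of $\tilde G(\HH)$, and this case cannot be excluded at all. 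The intended (1) is also what the paper tacitly uses to get $a_{k+1,2}=1$.

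For $3\le l\le k$ your doubt is justified, and the gap cannot be closed: the lemma is false even under the intended conditions. Let $G$ have vertices $x,y,a,b_1,b_2,b_3,c_1,c_2,c_3,e$ and edges
$(x,y),(y,a),(a,x),(y,b_1),(b_1,b_2),(b_2,b_3),(b_3,x),(b_2,c_1),(c_1,c_2),(c_2,c_3),(c_3,b_1),(c_2,e),(e,c_1)$.
There are no inverse pairs, so $L_{WB}(G)=L(G)$, and its simple cycles are the closed trails of $G$. These are exactly $T_1\colon x\to y\to a\to x$, $T_2\colon x\to y\to b_1\to b_2\to b_3\to x$, $T_3\colon b_1\to b_2\to c_1\to c_2\to c_3\to b_1$ and $T_4\colon c_1\to c_2\to e\to c_1$. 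Consecutive ones share exactly one edge of $G$, and the other pairs are vertex-disjoint in $G$. Hence the non-loop edges of $\tilde G(\HH)$ are $\{T_1,T_2\},\{T_2,T_3\},\{T_3,T_4\}$.

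Orient the commuting pairs as $T_3\to T_1$, $T_4\to T_1$, $T_4\to T_2$. This satisfies the consistency conditions in either reading. The only instances of the intended (1) are $T_4$ against $\{T_1,T_2\}$ and $T_1$ against $\{T_3,T_4\}$, and both are synchronized; there is no commuting triangle. With $(h_{j_1},h_{j_2},h_{j_3},h_{j_4})=(T_1,T_2,T_4,T_3)$,
\[
A(j_1,\dots,j_4)=\begin{pmatrix}1&1&0&0\\1&1&0&1\\1&1&1&1\\1&1&1&1\end{pmatrix}.
\]
So $k=3$ and $\alpha=2$ is forced, but $a_{2,3}=0\neq 1=a_{4,3}$.

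This is precisely the configuration in your first bullet that you hoped to rule out. Here $w=T_4$ commutes with both $h_{j_1}$ and $h_{j_2}$ and points into both, yet does not commute with $v=T_3$. The triples $\{h_{j_1},w,v\}$ and $\{h_{j_2},w,v\}$ do not tie $a_{k+1,l}$ to $a_{2,l}$. The paper's proof breaks at the same spot, its case (4), where it only asserts that the two triangles ``force'' $a_{k+1,l}=a_{2,l}$.

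In this example $\det A=0$ anyway, since rows 3 and 4 coincide. So the conclusion of Lemma~\ref{lem:det0} is not refuted here, but proving it needs an argument that does not iterate this lemma.
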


\begin{proof}
Relabel the vertices
$h_{j_1},\dots,h_{j_m}$
as $1,\dots,m$.
Without loss of generality, we assume $a_{1,k+1}=0$, $a_{2,k+1}=1$.
Since $\{1,2\}\in E(\Tilde G(\HH))$,
we have $a_{1,2}=a_{2,1}=1$. Hence $a_{k+1,1}=a_{k+1,2}=1$
by Definition \ref{def:closure}.

Now let $3\le l\le k$.
Since $a_{1,l}=a_{2,l}$, the pair $(1,l)$ and $(2,l)$
falls into one of the following four cases:

\begin{enumerate}[(1)]
\item
$\{1,l\},\{2,l\}\in E(\Tilde G(\HH))$;

\item
$\{1,l\}\in E(\Tilde G(\HH))$ and
$\{2,l\}\notin E(\Tilde G(\HH))$;

\item
$\{1,l\}\notin E(\Tilde G(\HH))$ and
$\{2,l\}\in E(\Tilde G(\HH))$;

\item
$\{1,l\},\{2,l\}\notin E(\Tilde G(\HH))$.
\end{enumerate}

In each case, applying the closure conditions
to the triangles
\[
(1,l,k+1)
\quad\text{and}\quad
(2,l,k+1)
\]
forces
\[
a_{k+1,l}=a_{2,l}.
\]

Therefore,
\[
a_{k+1,l}=a_{2,l}
\qquad
(1\le l\le k+1),
\]
as required.
\end{proof}

Finally, by combining the parallel property
from Lemma \ref{lem:connected} with the inductive extension
from Lemma \ref{lem:para}, 
we can establish that the determinant vanishes 
for any induced subgraph containing at least one edge.

\begin{lem}\label{lem:det0}
In the framework of Setup \ref{set:note}, 
let $\Tilde{G}_+(j_1,\dots,j_m)$ be the induced subgraph with $m\ge 3$.
Let $E(\Tilde{G}(j_1,\dots,j_m)) \neq \emptyset$.
Then the following holds:
\begin{equation*}
    \det (A(j_1,\dots,j_m))=0.
\end{equation*}
\end{lem}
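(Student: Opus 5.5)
We aim to exhibit two identical rows of $A(J)$, $J=\{j_1,\dots,j_m\}$, which immediately gives $\det(A(J))=0$. Since $E(\Tilde{G}(j_1,\dots,j_m))\neq\emptyset$, after relabelling there is a pair $\{h_{j_1},h_{j_2}\}\in E(\Tilde{G}(\HH))$. If this pair is an isolated edge of the induced noncommutation graph, Lemma \ref{lem:case3} already gives $\det(A(J))=0$. So we may assume Case (4): the component of $\Tilde{G}(J)$ containing $h_{j_1},h_{j_2}$ has at least three vertices.

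The core is an iterative row-matching procedure based on Lemma \ref{lem:para}. Start with the two rows $r_1,r_2$ indexed by $h_{j_1},h_{j_2}$. Their first two entries agree, since $a_{1,1}=a_{1,2}=a_{2,1}=a_{2,2}=1$. Order the remaining indices so that vertices outside the connected component of $\{h_{j_1},h_{j_2}\}$ come last. For those external columns, Lemma \ref{lem:connected} (applied within the induced subgraph, whose closure properties are inherited by Remark \ref{rem:submat}) shows that all rows from the component agree. Thus only columns inside the component can create a discrepancy.

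Scan the columns in this order. Suppose the first disagreement between the current pair of rows occurs at column $k+1$. Lemma \ref{lem:para} then produces $\alpha\in\{1,2\}$ whose row agrees with row $k+1$ on columns $1,\dots,k+1$. Replace the pair by $(r_\alpha,r_{k+1})$. This new pair is again a noncommuting pair, because $a_{k+1,1}=a_{k+1,2}=1$ with $a_{\alpha,k+1}=1$, which forces an edge of $\Tilde{G}$ between them. Its common initial segment has strictly increased, so we continue scanning from column $k+2$. Since the segment length grows at each step and is bounded by $m$, the procedure terminates with two distinct rows that agree in every column. Hence $A(J)$ is singular.

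The main obstacle is re-applying Lemma \ref{lem:para}. That lemma assumes the distinguished noncommuting pair occupies the first two positions and that agreement holds on the initial block $1,\dots,k$. After swapping in $r_{k+1}$, the agreement on the first $k+1$ columns must be re-established in the relabelled coordinates. Concretely, the pair needs agreement on the columns of both new vertices, which are $a_{\alpha,\alpha}=a_{k+1,\alpha}=1$ and $a_{\alpha,k+1}=a_{k+1,k+1}=1$. I would handle this with a symmetric permutation that moves $\alpha$ and $k+1$ to the front while preserving the already matched block; such a permutation does not change the determinant up to sign. I would also check that the triangle case analysis in Lemma \ref{lem:para} uses only the closure conditions, so it is valid for any labelling. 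If matching the rows fails in some configuration, the fallback is to argue on columns instead via Lemma \ref{lem:trans}.
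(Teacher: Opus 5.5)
Your plan follows the paper's own proof of this lemma: isolated edge via Lemma~\ref{lem:case3}, outside columns via Lemma~\ref{lem:connected}, then iterating Lemma~\ref{lem:para} to lengthen a common initial segment. The bookkeeping you add is fine: the new pair is indeed noncommuting, and a symmetric relabelling that preserves the matched block is harmless. The gap is the engine itself, and the paper has the same gap. Lemma~\ref{lem:para} does not follow from the closure conditions and is in fact false, so your step ``Lemma~\ref{lem:para} produces $\alpha$ whose row agrees with row $k+1$ on columns $1,\dots,k+1$'' fails.

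The bad case is case (4) of its proof. Suppose $h_l$ commutes with $h_{j_1}$ and $h_{j_2}$ and points to both (so $a_{1,l}=a_{2,l}=0$), while $h_l$ and $h_{k+1}$ do not commute. Then $a_{k+1,l}=1\neq a_{2,l}$, and no closure condition prevents this, because the triangle $(2,l,k+1)$ contains two noncommuting pairs. Concretely, take $h_1,\dots,h_4$ whose only noncommuting pairs are $\{h_1,h_2\},\{h_2,h_4\},\{h_3,h_4\}$, with one-way edges $h_3\to h_1$, $h_4\to h_1$, $h_3\to h_2$. All closure conditions hold. The rows of $A$ are $(1,1,0,0)$, $(1,1,0,1)$, $(1,1,1,1)$, $(1,1,1,1)$. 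From $(h_1,h_2)$ with $k=3$ you get $\alpha=2$, yet rows $2$ and $4$ differ in column $3$.

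This happens inside the setup. Build a digraph from three edges $x,y,z$ and six internally disjoint directed paths of length at least two: from the head of $x$ to the tails of $x$ and $y$, from the head of $y$ to the tails of $x$ and $z$, and from the head of $z$ to the tails of $y$ and $z$. Its $L_{WB}(G)$ has exactly four simple cycles: $h_1$ through $x$, $h_2$ through $x,y$, $h_4$ through $y,z$, and $h_3$ through $z$. They meet in exactly this pattern. The determinant is still $0$, since rows $3$ and $4$ agree. But your procedure, with columns in the order $1,2,3,4$, never reaches that pair. Switching to columns is no repair either. By Lemma~\ref{lem:trans} it is the same procedure on the reversed closure, and Lemma~\ref{lem:para} fails there too; here the reversed closure is isomorphic to the original via $h_1\leftrightarrow h_3$, $h_2\leftrightarrow h_4$.

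The missing idea is that the closure conditions make the one-way edges transitive. Suppose $x\to z$ and $z\to y$ are one-way. If $x,y$ did not commute, then $z$ would commute with both ends of a noncommuting pair. Condition (1), in the form used in Lemma~\ref{lem:case3}, would then force $a_{x,z}=a_{y,z}$, whereas $a_{x,z}=1$ and $a_{y,z}=0$. So $x,y,z$ pairwise commute, and condition (2), as used in Lemma~\ref{lem:source_vertex}, forces $x\to y$. Thus the one-way edges form a transitive orientation of $G(\HH)$, which is the Choffrut--Goldwurm setting cited in the introduction.

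With this, the lemma follows quickly. Put $Z(x)=\{w\in J : a_{x,w}=0\}$. By transitivity, $z\in Z(x)$ implies $Z(z)\subsetneq Z(x)$. Choose a noncommuting pair $\{x,y\}\subseteq J$ minimizing $\#Z(x)+\#Z(y)$, and suppose $z\in Z(x)\setminus Z(y)$. Then $z\neq y$ and $a_{y,z}=1$. Moreover $y,z$ cannot commute, since otherwise $y\to z\to x$ would make $x,y$ commute. So $\{z,y\}$ is a noncommuting pair with a strictly smaller sum, a contradiction. By symmetry $Z(x)=Z(y)$, so rows $x$ and $y$ of $A(J)$ coincide and $\det A(J)=0$. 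This needs no case split and no Lemma~\ref{lem:connected}. The right potential is the number of zeros in the two rows, not the length of a common initial segment.
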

\begin{proof}
We divide the proof into two cases according to whether 
$\Tilde{G}(j_1,\dots,j_m)$ contains an isolated component.

If an isolated component exists, the claim follows 
from Lemma \ref{lem:case3}. Hence, we may assume that 
no isolated component exists. 
Under this assumption, we show that 
the adjacency matrix $A(j_1,\dots,j_m)$ contains
two identical rows or columns. 
By Lemma \ref{lem:trans}, it suffices to consider rows.
By Lemma \ref{lem:connected}, 
we may assume without loss of generality that
$\Tilde{G}(\mathcal{H})$ is a connected graph.

Since $E(\Tilde{G}(j_1,\dots,j_m)) \neq \emptyset$ 
and $\Tilde{G}(\HH)$ is connected,
the induced subgraph contains at least $m-1$ edges
(i.e., $\#E(\Tilde{G}(j_1,\dots,j_m)) \ge m-1$).
We choose one of these edges and relabel the vertices 
so that $\{1,2\} \in E(\Tilde G)$.
If the first and second rows coincide completely,
then $\det(A(j_1,\dots,j_m))=0$. 
Otherwise, let $k+1$ be the first index such that $a_{1,k+1} \neq a_{2,k+1}$,
that is, $a_{1,l}=a_{2,l}$ for all $1 \le l \le k$.

By Lemma \ref{lem:para},
there exist $\alpha \in \{1,2\}$ and an integer $s \ge k+1$
such that $a_{\alpha, l}=a_{k+1,l}$ for all $1 \le l \le s$.
If $s=m$, then the $\alpha$-th row and the $(k+1)$-th row
coincide completely, which yields $\det(A(j_1,\dots,j_m))=0$.

Otherwise, if $s < m$, let $s+1$ be the first index 
where these two rows differ.
Applying Lemma \ref{lem:para} again, we obtain another pair of rows
whose common initial segment has a length of at least $s+1$.
Thus, at each step, the length of the common initial segment 
strictly increases.

Here, we ensure that this extension process never 
terminates before reaching two identical rows.
Suppose for contradiction that the extension of 
the common length stops at some index $s < m$. 
By Lemma \ref{lem:para}, to determine the next entries, 
the row comparison would be forced to rely on the adjacency relation 
of a previously examined edge. This implies that the two rows must 
remain parallel at the $(s_{\mathrm{prev}}+1)$-th position,
where $s_{\mathrm{prev}}$ ($s_{\mathrm{prev}} \le s$) 
is the common length associated with that previous edge.
However, the extension process at that earlier step was terminated
precisely because the two rows were not parallel
at the $(s_{\mathrm{prev}}+1)$-th entry. 
This directly contradicts the fact that the current rows are
already guaranteed to be parallel up to the larger length $s$.
Therefore, such a configuration is impossible,
and the common length must successfully reach $m$.

Since this length is bounded above by $m$,
the process necessarily terminates after at most $m-2$ steps,
successfully yielding two identical rows. 
Therefore, we conclude that
\[
\det(A(j_1,\dots,j_m))=0.
\]
\end{proof}

The following lemma summarizes the evaluation of the determinant 
for all possible configurations of the induced subgraphs,
showing that the value is completely 
determined by the adjacency in the noncommutation graph.

\begin{lem}\label{lem:whole}
In the framework of Setup \ref{set:note},
for any subset of indices $J \subseteq \{1, \dots, n\}$,
the matrix $A(J)$ satisfies:
    \begin{equation*}
        \det (A(J))=
        \begin{cases}
            1 & \text{if} \ E(\Tilde{G}(J))=\emptyset, \\
            0 & \text{otherwise}.
        \end{cases}
    \end{equation*}
\end{lem}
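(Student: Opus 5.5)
The plan is to assemble Lemma \ref{lem:whole} from the case analysis already in place. I split according to $k=\#J$, with the convention that the empty index set gives $\det(A(\emptyset))=1$, matching the $k=0$ term of Lemma \ref{lem:smalldet}. Here $E(\Tilde{G}(J))$ means the edges of the induced noncommutation graph between distinct vertices, ignoring the loops, since every vertex of $\Tilde{G}(\HH)$ carries a loop.

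\textbf{Case $k=1$.} There are no edges between distinct vertices, and Lemma \ref{lem:small}(1) gives $\det(A(J))=1$.

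\textbf{Case $k=2$.} Lemma \ref{lem:small}(2) says exactly that $\det(A(J))=0$ if $\{h_{j_1},h_{j_2}\}\in E(\Tilde{G}(\HH))$ and $\det(A(J))=1$ otherwise. This is the claim, since $E(\Tilde{G}(J))$ is nonempty precisely when that pair is an edge.

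\textbf{Case $k\ge 3$ with $E(\Tilde{G}(J))=\emptyset$.} No two vertices of $J$ fail to commute, so the hypothesis of Lemma \ref{lem:det_noncomm} holds, and it gives $\det(A(J))=1$. That lemma relies on Lemma \ref{lem:source_vertex}. Its statement is phrased for vertex sets pairwise non-adjacent in $\Tilde{G}(\HH)$, which is the same condition read in the induced subgraph, so it applies to every subset of $J$ met along the way.

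\textbf{Case $k\ge 3$ with $E(\Tilde{G}(J))\neq\emptyset$.} Lemma \ref{lem:det0} yields $\det(A(J))=0$. Inside that lemma, the subcase where the induced noncommutation graph has a two-vertex component is handled by Lemma \ref{lem:case3}, and the remaining configurations by the row-extension argument via Lemmas \ref{lem:connected} and \ref{lem:para}.

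The main obstacle is checking that the hypotheses of the earlier lemmas match the induced setting. Lemma \ref{lem:connected} is stated for components of the whole graph $\Tilde{G}(\HH)$, but Lemma \ref{lem:det0} uses it for components of $\Tilde{G}(J)$. So I would verify directly from closure condition (1) that two vertices in the same component of $\Tilde{G}(J)$ have equal rows on the columns outside that component, by propagating along a path. Suppose $u,u'$ are adjacent in $\Tilde{G}(J)$ and $v$ commutes with both. If $u,u'$ also commute with each other, condition (1) synchronizes them directly. Otherwise the triple is noncommuting only on the pair $\{u,u'\}$, and condition (1), applied with the roles arranged so that $v$ is the vertex commuting with neither of the other two, forces $a_{u,v}=a_{u',v}$.

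Because Lemma \ref{lem:trans} lets row arguments transfer to columns, and Remark \ref{rem:submat} identifies $A(J)$ with a principal submatrix, the four cases are exhaustive and together give exactly the stated dichotomy.
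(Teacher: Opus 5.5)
Your proposal is the paper's proof---Lemma \ref{lem:small} for $k\le 2$, Lemma \ref{lem:det_noncomm} for $k\ge 3$ with $E(\Tilde{G}(J))=\emptyset$, Lemma \ref{lem:det0} otherwise---and your induced-component version of Lemma \ref{lem:connected} is a correct sharpening of the paper's ``without loss of generality $\Tilde{G}(\HH)$ is connected''. That sharpening holds provided closure condition (1) is read as the paper actually uses it: a noncommuting pair and a third hike commuting with both are synchronized, and your subcase where $u,u'$ commute cannot occur. One caution: the genuinely weak link inside Lemma \ref{lem:det0} is Lemma \ref{lem:para}, which fails for four hikes with noncommutation graph the path $h_1 - h_2 - h_4 - h_3$ and closure orientations $h_3\to h_1$, $h_4\to h_1$, $h_3\to h_2$ (rows $1,2$ of $A$ first differ in column $4$, yet rows $2,4$ differ in column $3$), so to make that case airtight observe instead that, with (1)--(2) so read, the closure orients $G(\HH)$ transitively, write $A(J)$ as the all-ones matrix minus the adjacency matrix $L$ of the reversed orientation (a strict partial order, so $L$ is nilpotent), and note that once two hikes in $J$ do not commute no chain covers $J$, whence $L^{\#J-1}=0$, $\operatorname{rank}L\le \#J-2$, $\operatorname{rank}A(J)\le\#J-1$ and $\det A(J)=0$.
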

\begin{proof}
The assertion is established by combining the results obtained 
for each cardinality $k=\#J$ and 
the structure of the corresponding edge set.

First, for the cases where k is small $(k=1,2)$,
the claim follows directly from Lemma \ref{lem:small}.
Next, consider the case $k \ge 3$. 
We distinguish the two possibilities for the edge set of the induced subgraph:
\begin{itemize}
    \item If $E(\Tilde{G}(J))=\emptyset$, then 
    $\det (A(J))=1$ by Lemma \ref{lem:det_noncomm}.
    \item If $E(\Tilde{G}(J))\neq \emptyset$, then 
    $\det (A(J))=0$ by Lemma \ref{lem:det0}.
\end{itemize}
Since these cases exhaust all possibilities for the edge configuration,
the proof is complete.
\end{proof}

The following lemma establishes the connection
between the edge set of the induced subgraph and 
the set of cliques $\mathcal{C}(G(\HH))$.

\begin{lem}\label{lem:hikeset}
In the framework of Setup \ref{set:note},
the subset of vertices 
$\{[h_{j_1}],\dots,[h_{j_k}]\}$ forms a clique in $G(\HH)$
(i.e., belongs to $\mathcal{C}(G(\HH))$)
if and only if the edge set of the induced subgraph vanishes:
\begin{equation*}
    E(\Tilde{G}(j_1,\dots,j_k))=\emptyset.
\end{equation*}
\end{lem}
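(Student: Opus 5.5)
The claim is essentially definitional, so the plan is to unwind the definitions of the commutation graph $G(\HH)$ and the noncommutation graph $\Tilde{G}(\HH)$ and compare them on the vertex set $\{h_{j_1},\dots,h_{j_k}\}$. One convention must be fixed first. $\Tilde{G}(\HH)$ carries a loop at every vertex, since $(h,h)\notin I_{\HH}$ by irreflexivity. Hence in the statement $E(\Tilde{G}(j_1,\dots,j_k))$ must be read as the set of edges $\{h_{j_r},h_{j_s}\}$ with $r\neq s$. This is exactly how the condition is used in Lemmas \ref{lem:small}, \ref{lem:det_noncomm} and \ref{lem:whole}, and I will state this convention explicitly at the outset.

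For the forward direction, I would assume $\{h_{j_1},\dots,h_{j_k}\}\in\mathcal{C}(G(\HH))$. Then the induced subgraph of $G(\HH)$ on these vertices is complete, so for all $r\neq s$ we have $\{h_{j_r},h_{j_s}\}\in E(G(\HH))$. By the definition of the commutation graph this means $(h_{j_r},h_{j_s})\in I_{\HH}$. By the definition of the noncommutation graph, $\{h_{j_r},h_{j_s}\}\notin E(\Tilde{G}(\HH))$. Since this holds for every distinct pair, the induced non-loop edge set is empty.

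For the converse, I would assume the induced non-loop edge set is empty. Then for each $r\neq s$ we have $\{h_{j_r},h_{j_s}\}\notin E(\Tilde{G}(\HH))$, so $(h_{j_r},h_{j_s})\in I_{\HH}$, that is, the two simple cycles are vertex-disjoint in $L_{WB}(G)$. Consequently every distinct pair is joined in $G(\HH)$, and since $G(\HH)$ is simple the induced subgraph is $K_k$, which is a clique. The degenerate cases are immediate: $k=0$ gives the empty clique, and $k=1$ gives a single vertex, for which there are no distinct pairs and the condition is vacuous.

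The only real obstacle is the loop convention noted in the first paragraph. With loops counted, the statement would fail for every nonempty set, so the proof must fix that reading explicitly before carrying out the two directions above.
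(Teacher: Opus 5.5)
Your proposal is correct and matches the paper's proof: both simply unwind the definitions of $G(\HH)$ and $\Tilde{G}(\HH)$ pair by pair. Your explicit convention that the loops of $\Tilde{G}(\HH)$ are excluded from $E(\Tilde{G}(j_1,\dots,j_k))$ is a real clarification that the paper leaves implicit; without it, the singleton case of Lemma \ref{lem:whole} would contradict Lemma \ref{lem:small}(1).
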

\begin{proof}
By definition, $\{[h_{j_1}],\dots,[h_{j_k}]\} \in \mathcal{C}(G(\HH))$
holds if and only if the simple cycles are pairwise disjoint;
that is, $V(h_{l_1})\cap V(h_{l_2})= \emptyset$ for all 
$j_1 \le l_1<l_2 \le j_k$. According to the construction of the graph 
$\Tilde{G}(\HH)$, this disjointness condition is
equivalent to saying that there is no edge
between any pair of these vertices: 
$\{h_{l_1},h_{l_2}\} \notin E(\Tilde{G}(\HH))$ for all 
$j_1 \le l_1<l_2 \le j_k$. This immediately implies that the induced subgraph 
contains no edges, i.e., $E(\Tilde{G}(j_1,\dots,j_k))=\emptyset$.
\end{proof}

The following is one of our main theorems.

\begin{thm}\label{thm:cliquehike}
    In the framework of Setup \ref{set:note}, 
    let $p_{\mathcal{H}}(G(\mathcal{H}))$ be the clique polynomial on $\ZZ \langle \mathcal{H} \rangle$.
    Then the following holds:
    \begin{equation*}
        \mathrm{Cdet}(I_n-D(\Tilde{G}_+(\HH))A(\Tilde{G}_+(\HH))
        =p_{\HH}(G(\HH)).
    \end{equation*}
\end{thm}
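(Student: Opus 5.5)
The proof assembles the lemmas already established; the argument is mostly bookkeeping. First, by Lemma \ref{lem:smalldet}, I expand the Cayley determinant as
\begin{equation*}
\mathrm{Cdet}(I_n-D(\Tilde{G}_+(\HH))A(\Tilde{G}_+(\HH)))=\sum_{J\subseteq\{1,\dots,n\}}(-1)^{\#J}\,(h_{j_1}\cdots h_{j_k})\det(A(J)),
\end{equation*}
with the term $J=\emptyset$ equal to $1$. Here the hikes are multiplied in increasing index order $j_1<\dots<j_k$.

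Second, I evaluate each coefficient $\det(A(J))$ with Lemma \ref{lem:whole}. It equals $1$ exactly when $E(\Tilde{G}(J))=\emptyset$ and $0$ otherwise. By Lemma \ref{lem:hikeset}, the condition $E(\Tilde{G}(J))=\emptyset$ is equivalent to $\{h_j\mid j\in J\}$ being a clique of $G(\HH)$. The empty set and singletons count as cliques, consistently with Lemma \ref{lem:small}(1) and the convention for $k=0$. So the sum collapses to
\begin{equation*}
\sum_{H\in\mathcal{C}(G(\HH))}(-1)^{\#V(H)}\,h_{j_1}\cdots h_{j_k},
\end{equation*}
where $V(H)=\{h_{j_1},\dots,h_{j_k}\}$.

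Third, I match this with Definition \ref{def:cliquepolyonM}. For each clique $H$, the map $\phi(H)$ is the trace $[h_{j_1}\cdots h_{j_k}]$. Since the members of a clique pairwise commute in $\HH$, this trace does not depend on the ordering of the factors, so the ordered product $h_{j_1}\cdots h_{j_k}$ coming from the Cayley expansion equals $\phi(H)$ in $\ZZ\langle\HH\rangle$. Also $(-1)^{\#J}=(-1)^{\#V(H)}$. Summing over cliques gives exactly $p_{\HH}(G(\HH))$, which proves the theorem.

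The only point needing care is the interplay between the noncommutative product and the Cayley ordering. I must check that Lemma \ref{lem:smalldet} really produces the hike factor times an integer, so that no reordering of noncommuting hikes is needed in the surviving terms. This holds because $A(J)$ has integer entries, which are central, and only cliques survive, where the order is irrelevant. The heavy lifting, namely the vanishing of $\det(A(J))$ for non-cliques, is already contained in Lemmas \ref{lem:case3}, \ref{lem:det0} and \ref{lem:whole}, so I take those as given.
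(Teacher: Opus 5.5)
Your proposal is correct and follows the paper's proof essentially step for step: expand the Cayley determinant via Lemma \ref{lem:smalldet}, evaluate each $\det(A(J))$ with Lemma \ref{lem:whole}, identify the surviving index sets with cliques of $G(\HH)$ via Lemma \ref{lem:hikeset}, and match the result with Definition \ref{def:cliquepolyonM}. The paper leaves two points implicit that you make explicit: the integer entries of $A(J)$ are central, and each surviving product runs over pairwise-commuting hikes, so it equals $\phi(H)$ regardless of the order of the factors; the argument is otherwise unchanged.
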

\begin{proof}
By the equation \eqref{eq:smalldet_form} in Lemma \ref{lem:smalldet},
the Cayley determinant on the left-hand side can be written as:
\begin{equation}\label{eq:Cdet_reduced}
\sum_{k=0}^n (-1)^k \sum_{1 \le j_1 < \dots < j_k \le n} 
(h_{j_1} h_{j_2} \cdots h_{j_k}) \cdot \det (A(j_1,\dots,j_k)).
\end{equation}

By Lemma \ref{lem:whole}, 
the determinant $\det (A(j_1,\dots,j_k))$ equals $1$ if 
$E(\Tilde{G}(j_1,\dots,j_k)) = \emptyset$,
and $0$ otherwise. Thus, the summation \eqref{eq:Cdet_reduced} reduces to:
\begin{equation*}
\sum_{k=0}^n (-1)^k \sum_{\substack{1 \le j_1 < \dots < j_k \le n \\ E(\Tilde{G}(j_1,\dots,j_k)) = \emptyset}} h_{j_1} h_{j_2} \cdots h_{j_k}.
\end{equation*}

Furthermore, Lemma \ref{lem:hikeset} states that 
the condition $E(\Tilde{G}(j_1,\dots,j_k)) = \emptyset$
holds if and only if the corresponding subset of vertices
$\{[h_{j_1}],\dots,[h_{j_k}]\}$ forms a clique in the commutation graph $G(\HH)$.

Therefore, by rewriting the summation directly over the set of cliques 
$\mathcal{C}(G(\HH))$, we obtain:
\begin{equation*}
\sum_{c \in \mathcal{C}(G(\mathcal{H}))} (-1)^{\#c} \prod_{[h_j] \in c} h_j,
\end{equation*}
where $\#c$ denotes the number of vertices in the clique $c$. 
By Definition \ref{def:cliquepolyonM}, 
this precisely matches the definition of the clique polynomial
$p_{\HH}(G(\HH))$ on $\ZZ \langle \HH \rangle$. This completes the proof.
\end{proof}

By applying the ring homomorphism $\psi_1$ 
to the identity established in Theorem \ref{thm:cliquehike},
we obtain the following result on the polynomial ring $\ZZ [u]$.

\begin{cor}\label{cor:hikesum}
In the framework of Setup \ref{set:note}, 
let $D_w(\Tilde{G}_+(\HH))=\psi_1(D(\Tilde{G}_+(\HH)))$
be the image of the weight matrix under $\psi_1$, given by 
$\mathrm{diag}[u^{l(h_1)},\dots,u^{l(h_n)}]$.
Let $p_w(G(\HH),u)$ be the vertex-weighted clique polynomial on $\ZZ[u]$.
    Then the following holds:
    \begin{equation*}
        \det(I_n-D_w(\Tilde{G}_+(\HH))A(\Tilde{G}_+(\HH))=p_w(G(\mathcal{H}),u), 
    \end{equation*}
    where the sum runs over all cliques in $G(\HH)$.
\end{cor}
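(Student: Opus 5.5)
The corollary will follow by applying the ring homomorphism $\psi_1$ of Lemma \ref{lem:psi} to both sides of the identity in Theorem \ref{thm:cliquehike}. Here $\HH=\Sigma_M^*/I_{\HH}$ and the weights are $k_i=l(h_i)$. First I check that $\psi_1$ is a well-defined ring homomorphism $\ZZ\langle\HH\rangle\to\ZZ[u]$ sending $h_i\mapsto u^{l(h_i)}$. This holds because the target is commutative, so the defining relations $h_ih_j-h_jh_i$ for $(h_i,h_j)\in I_{\HH}$ map to $0$. Then I extend $\psi_1$ entrywise to a map $M_n(\ZZ\langle\HH\rangle)\to M_n(\ZZ[u])$.

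The key step is to show that $\psi_1$ commutes with the Cayley determinant, i.e.\ $\psi_1(\mathrm{Cdet}(X))=\det(\psi_1(X))$ for every $X\in M_n(\ZZ\langle\HH\rangle)$. In Definition \ref{def:Cdet}, $\mathrm{Cdet}(X)$ is a signed sum of products $x_{1,\sigma(1)}\cdots x_{n,\sigma(n)}$ taken in row order. The map $\psi_1$ is additive and multiplicative, so it carries each such product to $\psi_1(x_{1,\sigma(1)})\cdots\psi_1(x_{n,\sigma(n)})$. In the commutative ring $\ZZ[u]$ these products assemble into the ordinary Leibniz expansion of the determinant, as in the remark following Definition \ref{def:Cdet}.

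Next I compute the image of the matrix. The identity and the $0/1$ matrix $A(\Tilde G_+(\HH))$ have integer entries and are fixed by $\psi_1$. The matrix $D(\Tilde G_+(\HH))$ maps to $\mathrm{diag}[u^{l(h_1)},\dots,u^{l(h_n)}]=D_w(\Tilde G_+(\HH))$. Since $\psi_1$ is a ring homomorphism, the entrywise extension respects matrix products and differences, so $\psi_1(I_n-DA)=I_n-D_wA$. Combined with the key step, the left-hand side of Theorem \ref{thm:cliquehike} maps to $\det(I_n-D_w(\Tilde G_+(\HH))A(\Tilde G_+(\HH)))$.

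On the right-hand side, Lemma \ref{lem:psi} (see also Remark \ref{rem:12(3)}(3)) gives $\psi_1(p_{\HH}(G(\HH)))=p_w(G(\HH),u)$ directly. Each clique term $(-1)^{\#c}\prod_{h_j\in c}h_j$ maps to $(-1)^{\#c}u^{\sum_{h_j\in c} l(h_j)}$, which is exactly the corresponding summand of Definition \ref{def:cliquepolyont}. Equating the images of the two sides yields the corollary. I expect the only point needing care to be the compatibility of $\mathrm{Cdet}$ with $\psi_1$, and this is a routine term-by-term check.
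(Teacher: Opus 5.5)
Your proposal is correct and follows the same route as the paper. You apply $\psi_1$ to both sides of Theorem \ref{thm:cliquehike}, use that over the commutative ring $\ZZ[u]$ the image of $\mathrm{Cdet}(I_n-DA)$ is $\det(I_n-D_wA)$, and identify $\psi_1(p_{\HH}(G(\HH)))=p_w(G(\HH),u)$ via Lemma \ref{lem:psi} and Remark \ref{rem:12(3)}. Your extra checks only make explicit what the paper asserts in one line: that $\psi_1$ descends to $\ZZ\langle\HH\rangle$ because the commutator relations vanish in a commutative target, and that $\psi_1$ commutes with the row-ordered Leibniz expansion.
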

\begin{proof}
By applying the ring homomorphism $\psi_1:\ZZ \langle \HH \rangle \to \ZZ[u]$
to both sides of the identity in Theorem \ref{thm:cliquehike},
the assertion follows immediately.

Specifically, on the left-hand side, since $\psi_1$
maps the matrix $D(\Tilde{G}_+(\HH))$ to $D_w(\Tilde{G}_+(\HH))$
entrywise and maps the commutative determinant $\mathrm{Cdet}$
to the standard matrix determinant $\det$ over $\ZZ[u]$, we have:
\begin{equation*}
\psi_1 \left( \mathrm{Cdet}(I_n - D(\tilde{G}_+(\mathcal{H}))
A(\tilde{G}_+(\mathcal{H}))) \right) 
= \det(I_n - D_w(\tilde{G}_+(\mathcal{H}))A(\tilde{G}_+(\mathcal{H}))).
\end{equation*}

On the right-hand side, by Remark \ref{rem:12(3)} (\ref{rem:(3)}),
the image of the clique polynomial $p_{\HH}(G(\HH))$
under $\psi_1$ yields 
the vertex-weighted clique polynomial. 
Combining these two relations completes the proof.
\end{proof}

The following theorem successfully establishes our primary goal:
showing that the determinant of the closure adjacency matrix
explicitly yields the reciprocal of the Ihara zeta function 
for the digraph $G$.

\begin{thm}[Theorem \ref{thm:main1}]
In the framework of Setup \ref{set:note},
let $D(\Tilde{G}_+(\HH))$ be the image of 
the diagonal matrix under $\psi_1$, 
given by $\mathrm{diag}[u^{l(h_1)},\dots,u^{l(h_n)}]$.
Then, the reciprocal of the Ihara zeta function of the digraph $G$
is given by the following determinant formula:
    \begin{equation*}
        \zeta(G,u)^{-1} = \det(I_n - D_w(\Tilde{G}_+(\HH))A(\Tilde{G}_+(\HH))).
    \end{equation*}
\end{thm}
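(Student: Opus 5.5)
The plan is to chain together three identities already established in the paper. We read each of them as an equality in $\ZZ[u]$.

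First, Proposition \ref{prop:hashimoto} (the Hashimoto-type expression) gives $\zeta(G,u)^{-1}=\det(I-A(L_{WB}(G))u)$. Second, Lemma \ref{lem:hashiwei}, which rests on Proposition \ref{prop:ponst}, rewrites this determinant as the vertex-weighted clique polynomial $p_w(G(\HH),u)$. Here the weights are the cycle lengths $k_i=l(h_i)$ of the simple cycles of $L_{WB}(G)$. Third, Corollary \ref{cor:hikesum} identifies $p_w(G(\HH),u)$ with $\det(I_n-D_w(\Tilde{G}_+(\HH))A(\Tilde{G}_+(\HH)))$. Composing the three equalities gives the statement.

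Before composing, I would check that the objects match. The set $\Sigma_M$ of simple cycles in $M$ (Definition \ref{def:tracepath}) must be the same as the set $\Sigma$ of simple cycles of $L_{WB}(G)$ that appears in Theorem \ref{thm:main1}. This holds because a cycle in $M$ with distinct edges $e_1\cdots e_k$ is exactly a cycle of $L_{WB}(G)$ with distinct vertices, as Remark \ref{rem:subdivision}(1) notes. With that identification, the weight attached to $h_i$ in Lemma \ref{lem:hashiwei} is the exponent $l(h_i)$ in $D_w$.

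I would also check that $\psi_1$ is the homomorphism $\ZZ\langle\HH\rangle\to\ZZ[u]$ with $h_i\mapsto u^{l(h_i)}$ used in Corollary \ref{cor:hikesum}. This map is well defined: $\ZZ[u]$ is commutative, so every commutation relation in $I_{\HH}$ is respected. Applying it to $\mathrm{Cdet}$ gives the ordinary $\det$ on $\ZZ[u]$.

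The only substantive step is Corollary \ref{cor:hikesum}, which in turn depends on Theorem \ref{thm:cliquehike}. That theorem relies on Lemma \ref{lem:whole}: $\det A(J)=1$ exactly when $J$ is independent in $\Tilde{G}(\HH)$, and $0$ otherwise. I would take this as given, since it is already stated. If I had to re-verify it, the delicate part would be the vanishing argument of Lemma \ref{lem:det0} for $J$ containing an edge of $\Tilde{G}(\HH)$.

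To avoid depending on the non-commutative layer, I would first try a commutative shortcut. Expand $\det(I_n-D_wA)$ over principal minors to get $\sum_J(-1)^{\#J}u^{\sum_{j\in J}l(h_j)}\det A(J)$. Then Lemma \ref{lem:whole} together with Lemma \ref{lem:hikeset} reduces the sum directly to the clique sum in $p_w(G(\HH),u)$. This bypasses the Cayley determinant entirely.
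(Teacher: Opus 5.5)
Your proposal is correct and is the paper's own proof: chain Proposition~\ref{prop:hashimoto}, Lemma~\ref{lem:hashiwei} (via Proposition~\ref{prop:ponst}) and Corollary~\ref{cor:hikesum}, with the real content sitting in Lemma~\ref{lem:whole}. Your commutative shortcut is also valid, since it is just Lemma~\ref{lem:smalldet} carried out after applying $\psi_1$ rather than before. Your identification check is unnecessary, and slightly off at the level of traces: Setup~\ref{set:note} already defines $\Sigma_M$ as the simple cycles of $L_{WB}(G)$, whereas in $M$ two rotations of a cycle that revisits a vertex of $G$ can be different traces.
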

\begin{proof}
By Proposition \ref{prop:hashimoto}, Proposition \ref{prop:ponst},
and Lemma \ref{lem:hashiwei}, the reciprocal of the Ihara zeta function
$\zeta(G,u)^{-1}$ of the digraph $G$ is
identically equal to the vertex-weighted clique polynomial $p_w(G(\HH),u)$
on $\ZZ[u]$.

On the other hand, Corollary \ref{cor:hikesum} guarantees that 
this vertex-weighted clique polynomial coincides precisely 
with the determinant on the right-hand side of the target identity:
\begin{equation*}
p_w(G(\mathcal{H}),u) = \det(I_n - D_w(\tilde{G}_+(\mathcal{H}))A(\tilde{G}_+(\mathcal{H}))).
\end{equation*}
Combining these two relations immediately 
yields the desired identity, which completes the proof.
\end{proof}

To illustrate our main results,
we provide a concrete example demonstrating how 
the determinant of the closure adjacency matrix explicitly
yields the reciprocal of the Ihara zeta function,
and show its equivalence to the 
classical Hashimoto-type determinant expression.

\begin{ex}

Consider a digraph $G$
and its corresponding line digraph without backtrack $L_{WB}(G)$,
as illustrated in Figure \ref{fig:graphs} and Figure \ref{fig:linedigraphs}.

\begin{figure}[ht]
    \centering
    \begin{minipage}{0.45\textwidth}
\begin{tikzpicture}[
    >=Stealth, 
    every node/.style={circle, draw=black, fill=black, inner sep=0pt, minimum size=4pt},
    main edge/.style={->, shorten >=1pt, shorten <=1pt, thick}, 
    edge label/.style={circle, fill=white, draw=black, inner sep=1.5pt, font=\bfseries\small, text=black}
]

\node (v6) at (-1.5,  2.0) {}; 
\node (v1) at ( 1.5,  2.0) {}; 

\node (v4) at (-1.5, -2.0) {}; 
\node (v3) at ( 1.5, -2.0) {}; 

\node (v5) at (-3.0,  0.0) {}; 
\node (v2) at ( 3.0,  0.0) {}; 


\draw[main edge, bend left=25]  (v6) to node[edge label, midway] {6} (v1);
\draw[main edge, bend left=25]  (v1) to node[edge label, midway] {7} (v6);

\draw[main edge, bend left=25]  (v4) to node[edge label, midway] {9} (v3);
\draw[main edge, bend left=25]  (v3) to node[edge label, midway] {3} (v4);

\draw[main edge] (v6) -- node[edge label, midway] {8} (v4);
\draw[main edge] (v3) -- node[edge label, midway] {10} (v1);

\draw[main edge] (v1) -- node[edge label, midway] {1} (v2);
\draw[main edge] (v2) -- node[edge label, midway] {2} (v3);

\draw[main edge] (v4) -- node[edge label, midway] {4} (v5);
\draw[main edge] (v5) -- node[edge label, midway] {5} (v6);

\end{tikzpicture}
    \caption{Digraph $G$}
    \label{fig:graphs}
\end{minipage}
\hfill
    \begin{minipage}{0.52\textwidth}
    \centering
\begin{tikzpicture}[
    >=Stealth,
    vertex/.style={draw=black, circle, fill=white, inner sep=2pt, minimum size=14pt, font=\bfseries\small, thick},
    main edge/.style={->, shorten >=1pt, shorten <=1pt, thick, color=black}, 
    scale=0.7
]


\pgfmathsetmacro{\R}{3.5}
\node[vertex] (v1) at (120:\R) {1};
\node[vertex] (v2) at (60:\R)  {2};
\node[vertex] (v3) at (0:\R)   {3};
\node[vertex] (v4) at (-60:\R) {4};
\node[vertex] (v5) at (-120:\R){5}; 
\node[vertex] (v6) at (180:\R) {6};

\pgfmathsetmacro{\r}{1.5}
\node[vertex] (v10) at (90:\r)  {10};
\node[vertex] (v7)  at (0:\r)   {7}; 
\node[vertex] (v8)  at (-90:\r) {8};
\node[vertex] (v9)  at (180:\r) {9};


\draw[main edge] (v1) -- (v2);
\draw[main edge] (v2) -- (v3);
\draw[main edge] (v3) -- (v4);
\draw[main edge] (v4) -- (v5);
\draw[main edge] (v5) -- (v6);
\draw[main edge] (v6) -- (v1);

\draw[main edge] (v9)  -- (v10);
\draw[main edge] (v10) -- (v7);
\draw[main edge] (v7)  -- (v8);
\draw[main edge] (v8)  -- (v9);

\draw[main edge] (v10) -- (v1);  
\draw[main edge] (v2)  -- (v10); 
\draw[main edge] (v5)  -- (v8);  
\draw[main edge] (v8)  -- (v4);  

\end{tikzpicture}
\caption{$L_{WB}(G)$ of $G$}
\label{fig:linedigraphs}
    \end{minipage}
\end{figure}

For convenience, 
we label each directed edge of $G$
(which corresponds to a vertex in $L_{WB}(G)$
as shown above. In the graph $L_{WB}(G)$,
we can identify exactly six simple cycles.
By listing their vertices in the order of traversal,
these cycles are given by:
\begin{enumerate}[$h_1$:]
    \item 1,2,3,4,5,6
    \item 1,2,10
    \item 4,5,8
    \item 7,8,9,10
    \item 1,2,10,7,8,4,5,6
    \item 1,2,3,4,5,8,9,10
\end{enumerate}

Based on the commutation relations among these cycles,
the noncommutation graph $\tilde{G}(\HH)$ and one of its closures, 
$\tilde{G}_+(\HH)$, are constructed as shown 
in Figure \ref{fig:noncomu} and Figure \ref{fig:closures}. 
For enhanced visibility, the vertices in these graphs are labeled with the index $i$ instead of $h_i$.

\begin{figure}
    \centering
    \begin{minipage}{0.48\textwidth}
        \begin{tikzpicture}[
    thick,
    auto, 
    every edge/.style={draw, thick, shorten >=1.5pt, shorten <=1.5pt},
    every node/.style={draw, thick, circle, inner sep=1pt, minimum size=0.6cm, font=\small\bfseries},
    scale=0.9, transform shape]

    \node (1) at (90:2.5cm) {1};
    \node (2) at (30:2.5cm) {2};
    \node (3) at (-30:2.5cm) {3};
    \node (4) at (-90:2.5cm) {4};
    \node (5) at (-150:2.5cm) {5};
    \node (6) at (150:2.5cm) {6};

    
    \draw (1) to (2);
    \draw (1) to (3);
    \draw (1) to (5);
    \draw (1) to (6);

    \draw (2) to (4);
    \draw (2) to (5);
    \draw (2) to (6);

    \draw (3) to (4);
    \draw (3) to (5);
    \draw (3) to (6);

    \draw (4) to (5);
    \draw (4) to (6);

    \draw (5) to (6);

    \draw [thick, out=60,   in=120,  min distance=1cm] (1) to (1);
    \draw [thick, out=0,    in=60,   min distance=1cm] (2) to (2);
    \draw [thick, out=-60,  in=0,    min distance=1cm] (3) to (3);
    \draw [thick, out=-120, in=-60,  min distance=1cm] (4) to (4);
    \draw [thick, out=-180, in=-120, min distance=1cm] (5) to (5);
    \draw [thick, out=120,  in=180,  min distance=1cm] (6) to (6);
    
\end{tikzpicture}
\caption{The noncommutation graph $\tilde{G}(\HH)$}
\label{fig:noncomu}
    \end{minipage}
\begin{minipage}{0.48\textwidth}
    \begin{tikzpicture}[
    thick, 
    ->, >=stealth, 
    auto, 
    every edge/.style={draw, thick, >=stealth, shorten >=1.5pt, shorten <=1.5pt},
    every node/.style={draw, thick, circle, inner sep=1pt, minimum size=0.6cm, font=\small\bfseries},
    scale=0.9, transform shape]

    \node (1) at (90:2.5cm) {1};
    \node (2) at (30:2.5cm) {2};
    \node (3) at (-30:2.5cm) {3};
    \node (4) at (-90:2.5cm) {4};
    \node (5) at (-150:2.5cm) {5};
    \node (6) at (150:2.5cm) {6};

    
    \draw (1) to (2);
    \draw [bend right=12] (1) to (3);
    \draw [bend right=18] (1) to (4);
    \draw [bend right=12] (1) to (5);
    \draw (1) to (6);

    \draw [bend left=12] (2) to (1); 
    \draw (2) to (3);
    \draw [bend right=18] (2) to (4);
    \draw [bend left=12] (2) to (5);
    \draw [bend right=18] (2) to (6);

    \draw [bend right=12] (3) to (1); 
    \draw (3) to (4);
    \draw [bend left=18] (3) to (5);
    \draw [bend left=12] (3) to (6);

    \draw [bend right=18] (4) to (2); 
    \draw [bend right=12] (4) to (3);
    \draw (4) to (5);
    \draw [bend left=12] (4) to (6);

    \draw (5) to (1);
    \draw [bend right=18] (5) to (2);
    \draw [bend right=12] (5) to (3);
    \draw [bend right=12] (5) to (4);
    \draw (5) to (6);

    \draw [bend left=18] (6) to (1);
    \draw [bend left=18] (6) to (2);
    \draw [bend right=18] (6) to (3);
    \draw [bend right=18] (6) to (4);
    \draw [bend right=18] (6) to (5);

    
    \draw [->, thick, out=60, in=120, min distance=1cm] (1) to (1);
    
    \draw [->, thick, out=0, in=60, min distance=1cm] (2) to (2);
    
    \draw [->, thick, out=-60, in=0, min distance=1cm] (3) to (3);
    
    \draw [->, thick, out=-120, in=-60, min distance=1cm] (4) to (4);
    
    \draw [->, thick, out=-180, in=-120, min distance=1cm] (5) to (5);
    
    \draw [->, thick, out=120, in=180, min distance=1cm] (6) to (6);
    
\end{tikzpicture}
\caption{A closure $\tilde{G}_+(\HH)$}
\label{fig:closures}
\end{minipage}

\end{figure}

Since the closure operation is generally not unique, 
we explicitly specify our choice 
in constructing $\tilde{G}_+(\mathcal{H})$ 
from $\tilde{G}(\mathcal{H})$.
For the non-adjacent pairs $\{1, 4\}$ and $\{2, 3\}$
in $\tilde{G}(\mathcal{H})$,
which correspond to commuting cycles,
we choose to insert the directed edges as follows:
\begin{align*}
    1 \to 4 \quad &\text{and} \quad 4 \not\to 1, \\
    2 \to 3 \quad &\text{and} \quad 3 \not\to 2.
\end{align*}
By combining these choices with the closure rule
for all other pairs of vertices,
we obtain the directed graph $\tilde{G}_+(\mathcal{H})$.

The adjacency matrix $A(\tilde{G}_+(\HH))$ 
of the closure graph is explicitly given by:
\begin{equation*}
A(\tilde{G}_+(\HH))=
    \begin{pmatrix}
        1 & 1 & 1 & 1 & 1 & 1 \\
        1 & 1 & 1 & 1 & 1 & 1 \\
        1 & 0 & 1 & 1 & 1 & 1 \\
        0 & 1 & 1 & 1 & 1 & 1 \\
        1 & 1 & 1 & 1 & 1 & 1 \\
        1 & 1 & 1 & 1 & 1 & 1
    \end{pmatrix}
\end{equation*}

Applying the main determinant formula established in Theorem \ref{thm:main1},
we compute $\det(I_n-D_w(\Tilde{G}_+)A(\Tilde{G}_+))$.
By substituting the respective cycle lengths to obtain the diagonal entries 
$u^{l(h_i)}$, this determinant calculation yields:
\begin{equation*}
    \det \left( 
    I_6-
    \begin{pmatrix}
        u^6 &&&&& \\
        & u^3 &&&&\\
        &&u^3 &&&\\
        &&&u^4 &&\\
        &&&&u^8 & \\
        &&&&& u^8
    \end{pmatrix} \cdot 
     \begin{pmatrix}
        1 & 1 & 1 & 1 & 1 & 1 \\
        1 & 1 & 1 & 1 & 1 & 1 \\
        1 & 0 & 1 & 1 & 1 & 1 \\
        0 & 1 & 1 & 1 & 1 & 1 \\
        1 & 1 & 1 & 1 & 1 & 1 \\
        1 & 1 & 1 & 1 & 1 & 1
    \end{pmatrix}
    \right)
    =u^{10} - 2u^{8} - u^{4} - 2u^{3} + 1.
\end{equation*}

To verify this result,
we compute the reciprocal of the Ihara zeta function
via the classical Hashimoto-type expression $\det(I_{10}-A(L_{WB}(G))u)$,
where $A(L_{WB}(G))$ is the adjacency matrix of the line digraph without
backtrack $L_{WB}(G)$.
The corresponding $10\times 10$ determinant is given by:
\begin{equation*}
    \det 
    \begin{pmatrix}
        1 & -u & 0 & 0 & 0 & 0 & 0 & 0 & 0 & 0 \\
        0 & 1 & -u & 0 & 0 & 0 & 0 & 0 & 0 & -u \\
        0 & 0 & 1 & -u & 0 & 0 & 0 & 0 & 0 & 0 \\
        0 & 0 & 0 & 1 & -u & 0 & 0 & 0 & 0 & 0 \\
        0 & 0 & 0 & 0 & 1 & -u & 0 & -u & 0 & 0 \\
        -u & 0 & 0 & 0 & 0 & 1 & 0 & 0 & 0 & 0 \\
        0 & 0 & 0 & 0 & 0 & 0 & 1 & -u & 0 & 0 \\
        0 & 0 & 0 & -u & 0 & 0 & 0 & 1 & -u & 0 \\
        0 & 0 & 0 & 0 & 0 & 0 & 0 & 0 & 1 & -u \\
        -u & 0 & 0 & 0 & 0 & 0 & -u & 0 & 0 & 1 \\
    \end{pmatrix}
    =u^{10} - 2u^{8} - u^{4} - 2u^{3} + 1
\end{equation*}

Both approaches yield the exact same polynomial,
confirming the validity of our main theorem. 
\end{ex}

\section*{Acknowledgments}
The author would like to thank Professor Hidekazu Furusho for his
helpful comments and advices.
The author is grateful to Professor Iwao Sato for his helpful comments 
on results and during revisions of this paper.
The author is deeply indebted to Professor Yoshinori Yamasaki for his insightful suggestions and valuable comments in revising this paper.

\end{document}